\documentclass[coll]{imsart}
\usepackage[longnamesfirst]{natbib}
\usepackage{geometry}
\usepackage{color}
\usepackage{tabularx}
\usepackage{algorithm}
\usepackage{hyperref}
\usepackage{graphicx}
\arxiv{arXiv:1206.6913v1}
\DeclareGraphicsRule{*}{mps}{*}{}
\usepackage{amssymb,amsmath,amsthm} 
\theoremstyle{plain} \newtheorem{thm}{Theorem}
\newtheorem{lemma}{Lemma}
\newtheorem{prop}{Proposition}
\theoremstyle{remark} 
\theoremstyle{definition} 

\setlength{\topmargin}{-1cm}
\setlength{\headsep}{.75cm}

\setlength{\textwidth}{16cm}
\setlength{\textheight}{22cm}
\setlength{\oddsidemargin}{0cm}
\setlength{\evensidemargin}{0cm}
\setlength{\parindent}{.75cm}
\setlength{\parskip}{.125cm}
\definecolor{bgblue}{rgb}{0.04,0.39,0.53}
\usepackage[longnamesfirst]{natbib}


\renewcommand\labelenumi{(\theenumi)}
\newcommand{\CM}{\mathcal M}
\newcommand{\Mp}{{\mathcal M}_{\mathbf p}}
\newcommand{\CN}{\mathcal N}
\newcommand{\CU}{\mathcal U}
\newcommand{\BBR}{\mathbb{R}}
\newcommand{\BH}{\bar{\cal H}}
\newcommand{\bx}{\boldsymbol{x}}

\def\noi{\noindent}
\numberwithin{equation}{section}
\begin{document}
\begin{frontmatter}
\title{Sampling From A Manifold
\protect\thanksref{T1}}
\runtitle{Sampling From A Manifold}
\thankstext{T1}{This work was part of a project funded by the French ANR under a Chaire d'Excellence
at the University of Nice Sophia-Antipolis.}
\date{August 6, 2011}
\begin{aug}
\author{\fnms{Persi} \snm{Diaconis}\thanksref{t3}\ead[label=e1]{No email}},
\author{\fnms{Susan} \snm{Holmes}\thanksref{t1}
\ead[label=e3]{susan@stat.stanford.edu}
}
\and
\author{\fnms{Mehrdad} \snm{Shahshahani}\thanksref{t2}\ead[label=e2]{mshahshahani@gmail.com}
\ead[label=u2,url]{http:}}
\thankstext{t2}{Supported by a DARPA grant HR 0011-04-1-0025.}
\thankstext{t1}{Supported by the NIH grant NIH-R01GM086884.}
\thankstext{t3}{Supported by NSF grant DMS  0804324}
\runauthor{Diaconis, Holmes, Shahshahani}
\affiliation{Stanford University and U. Teheran}
\address{Susan Holmes and Persi Diaconis\\ Department of Statistics\\
Sequoia Hall\\
CA 94305 Stanford, USA.\\
\printead{e3}}
\address{Mehrdad Shahshahani\\
Mathematics Institute\\
Teheran, Iran.\\
\printead{e2}\\
}
\end{aug}
\begin{abstract}
  We develop algorithms for sampling from a probability distribution
  on a submanifold embedded in $\mathbb{R}^n$. Applications are given
  to the evaluation of algorithms in `Topological Statistics';  to
  goodness of fit tests in exponential families and to Neyman's smooth test.
  This article is partially expository, giving an introduction to the tools of geometric
  measure theory. 
  
\end{abstract}

\begin{keyword}[class=AMS]
\kwd[Primary ]{60K35}
\kwd{60K35}
\kwd[; secondary ]{60K35}
\end{keyword}

\begin{keyword}
\kwd{manifold}
\kwd{conditional distribution}
\kwd{geometric measure theory}
\kwd{sampling}
\end{keyword}


\end{frontmatter}

\section{Introduction}\label{INT}
A variety of inferential tasks require drawing  samples from a probability 
distribution on a manifold. This occurs in sampling from the posterior 
distribution on constrained parameter spaces (eg covariance 
matrices), in testing goodness of fit for exponential families
conditional on sufficient statistics (eg the sum and product of the observations in a Gamma
family), and in generating data to test algorithms
in tolopogical statistics.

In our applications, we found that
examples involved domains with corners
and non smooth functions (eg max$(|x_1|,|x_2|,\ldots, |x_n|)$). We found
a useful set of tools in geometric measure theory. One of our goals is to explain and illustrate 
this in the usual language of probability and statistics.

To introduce the subject, consider the following two examples, used as
illustrations throughout.

\medskip
\noi \textbf{Example 1A: The Curved Torus} \quad Figure 2 shows a picture
of 1000 points on the torus
\begin{equation}
\CM = \{ [ ( R+r \cos ( \theta ) ) \cos ( \psi ) , ( R+r \cos ( \theta ) ) \sin ( \psi ) , r \sin ( \theta ) ] \},
\label{eq:1}
\end{equation}
$0 \leq \theta,\psi <2\pi$ for $R > r> 0$. 
The torus is formed by taking a circle of radius $r$ in the $(x,z)$ plane, centered at $x=r,z=0$ and rotating it around the $z$ axis.

 
Formula (\ref{eq:1}) gives the embedding of $\CM$ as a compact
2-dimensional manifold in $\BBR^3$. As such, $\CM$ inherits a natural
area measure: roughly, take a region on $\CM$, thicken it out by
$\epsilon$ to be fully 3-dimensional, compute the usual volume of the
thickened region and take the limit of this area divided by $\epsilon$
as $\epsilon \longrightarrow 0$.  This area measure can be normalized
to be a probability measure $\BH^2(dx)$ on $\CM$. The points shown are
sampled from $\BH^2(dx)$.
\smallskip
\begin{center}
\includegraphics[width=3in]{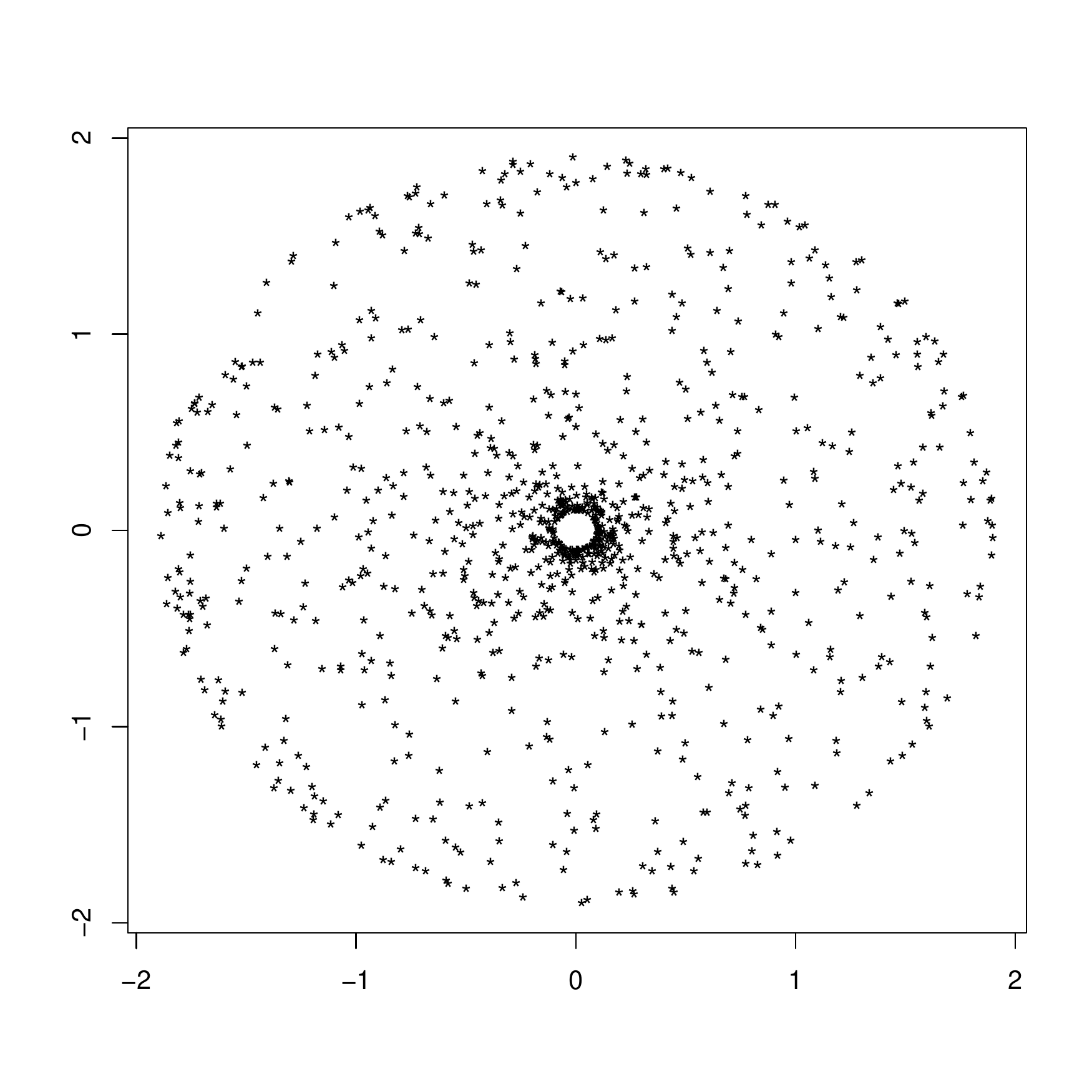}\\
{Figure 1: A sample of 1000 points from the na\"ive measure on a torus with R=1,r=0.9}
 \end{center}

Note that the sampled points are denser in regions with higher curvature
such as the inside of the torus. This distribution is from the na\"ive choice:
choose $\theta$ and $\psi$ uniformly 
and map onto $\CM$ using (\ref{eq:1}).
Figure (2.3) show both correctly and incorrectly generated points, see next section.

Such samples, with noise added, are used to calibrate topological
algorithms for estimating dimension, number of components and homology
in the emerging field of topological statistics. 
Examples such as two linked tori on the seven sphere and Klein bottles
are shown to come up naturally in image analysis \citep{CarlssonEGV}.

\medskip
\noi \textbf{Example 1B: Testing the Gamma Distribution} \quad For fixed
$n \geq 3, S, P>0$, let
\begin{equation}
\CM = \Big \{ (x_1,\ldots,x_n); \ \ x_i >0, \ \sum_{i=1}^n x_i=S, \ \prod_{i=1}^n x_i=P \Big \}.
\end{equation}
This is a compact ($n-2$)-dimensional submanifold in $\BBR^n$. The need
for samples from $\CM$ comes up in testing if random variables $X_1,
X_2, \ldots, X_n$ are independently drawn from the Gamma density
\begin{equation}
\frac{e^{-x/\sigma} x^{a-1}}{\sigma^a\Gamma(a)} \qquad 0<x<\infty,
\end{equation}
with $\sigma, a >0$ unknown parameters. The sufficient statistics for
$\sigma, a$ are $S=\sum_{i=1}^n X_i, P=\prod_{i=1}^n X_i$. In
numerous writings, R.~A. Fisher suggested using the conditional
distribution of $X_1, \ldots, X_n$ given $S, P$ to give exact goodness
of fit tests.
These ideas are reviewed in section 3 below. The conditional
distribution has a simple density with respect to $\BH^{n-2}(dx)$
 leading to practical algorithms for random generation and testing.
The proposed tests are different than the ones in \cite{Kallioras} or \cite{Pettitt}.
\cite{Goldman} and 
\cite{Yang:2006} explain interesting applications of these tests in modern
evolutionary analyses of DNA.
\subsubsection*{Related Literature}
There has been a steady interest in statistics on manifolds. 
The development of mean and variance estimators appears in \cite{Pennec}
and \cite{bhattacharya2003large}.
The book by \cite{bhattacharyab}  
about data on the shape space manifold contains several interesting
results.
Data on the sphere and the projective space are discussed in \cite{Beran}, \cite{Fisher} and \cite{Watson}.
Data on more general manifolds  appear in \cite{Gine}. One widespread
example occurs in physics and chemistry problems involving
configurations of atoms with some inter-atomic distances or angles
fixed; see \cite{Fixman:1974} or \cite{Ciccotti}. Any of these
settings give rise to the need for Monte Carlo sampling on manifolds.

There are well-known algorithms for sampling from the uniform
distribution on compact groups and other homogeneous spaces.
For instance, \cite{Eaton} proves that if an $n\times n$ matrix is
filled with iid standard normals and the QR decomposition is carried out, then 
the $Q$ part is distributed as the uniform distribution on the orthogonal group (Haar measure).
 \cite{Francesco,DiaconisS} develop this. 
There are also elegant algorithms
for sampling from the boundary of compact, convex sets in $\BBR^n$
\citep{hit,shake}. 
A different procedure, the \cite{Lalley} ``princess-and monster''
algorithm has been studied for sampling from the boundaries
of more general sets \citep{Comets,Niyogi}. 
These algorithms are based on moving within the interior
of the bounded set reflecting off the boundary. They are
different from the present procedures and may be very effective when applicable.
We do not know previous
literature on sampling from more general manifolds.

 Of course, conditional probability densities are standard fare, even
 with very general conditioning. However, explicit description of area
 measure and the use of the co-area formula is not so common. We only
 know of the foundational monograph by \cite{Tjur:1974}. This contains
 a good history. The development is both more and less general. Tjur
 works with Riemannian manifolds and smooth functions. We work with
 embedded manifolds but allow Lipschitz functions such as max/min. Tjur
 gives a self-contained development based on Radon measures. We are
 able to use more classical foundations from standard sources. Tjur's
 valuable monograph was written before the computer revolution. We
 emphasize techniques useful for sampling.

This paper studies the following problem of sampling from 
  $\CM$, an $m$-dimensional submanifold in $\BBR^n$.  Consider $f(x)\geq 0$
such that $\int_{\CM}f(x)\BH^m(dx) <\infty$ with $\BH^m(dx)$ the $m$-dimensional
area measure on $\CM$. Samples are to be drawn from the normalized
version of $f$. Section 2 gives basic definitions for submanifolds,
area measure,  Jacobians and the co-area formula. 
These notions are illustrated on examples 1A,1B.

Section 3 develops the theory for exponential families,
Section 4 that of Neyman's smooth test.

The algorithms presented are reasonably standard Markov chain Monte
Carlo methods supplemented by some geometrical tricks and the tools of
geometric measure theory. We hope they will be useful to researchers
who face similar problems.

The subject developed here may be considered as a continuous
analog of algebraic statistics as initiated in \cite{DiaconisSturmfels} and reviewed in
\cite{Drton}.
That theory began  by developing algorithms
for sampling from the conditional distribution
of discrete exponential families given their sufficient statistics. There, finding ways
of moving around on the space of data sets with given sufficient statistics 
leaned on tools from computational algebra (Gr\"obner bases).
Here, the same task is studied using direct geometric analysis 
and tools such as the curve selection lemma.
\section{Definitions and Tools}\label{DEF}
 
The classical subject of calculus on manifolds has an enormous
expository literature. We have found the elementary treatment of
 \cite{Hubbard:2007} readable and useful. In our
applications, pieces of manifolds with corners occur naturally. For
example, testing the three-parameter Gamma density gives rise to
\begin{equation*}
\CM = \Big \{ (x_1, \ldots, x_n); \ \ x_i >0, \ \sum_{i=1}^n x_i= S,
\ \prod_{i=1}^n x_i= P, \ \min x_i \geq m \Big \}.
\end{equation*}
Geometric measure theory provides convenient tools. We use \cite{Federer:1996},
denoted [F], as a principle reference. The introductory account by
\cite{Morgan} gives a useful taste of the subject matter. 
Recent references are \cite{Mattila},
\cite{Krantz}.

\subsection{First Definitions}\label{DEF.a}

A function $f:\BBR^{m}\longrightarrow \BBR^{n}$ is {\em
  Lipschitz} if $|f(x) - f(y)| \leq c |x-y|$ for some finite, positive
$c$. Euclidean distance is used for $|\cdot|$ on both sides. A set
in $\BBR^n$ is $m$-rectifiable [F, p. 251] if it is the
Lipschitz image of a bounded subset in $\BBR^m$.
This is a very rich class of sets, discussed at
length in the references above. All of the sets that arise in our
applications are rectifiable.

Use $\lambda^n(dx)$ for Lebesgue measure on the Lebesgue measurable
sets of $\BBR^n$. Given any subset $A \subseteq \BBR^n$, define the  $m$-dimensional
{\em Hausdorff measure} $\BH^m(A)$ by
\begin{equation*}
\BH^m(A) = \lim_{\delta \to 0} \inf_{\substack{A \subseteq \cup S_i, \\  \text{diam} (S_i) \leq \delta}} \
  \sum \alpha_m \Big ( \frac{\text{diam}(S_i)}{2} \Big ) ^m
\end{equation*}

\noi The infimum is taken over all countable coverings {$S_i$} of $A$ with
$\text{diam}(S_i) = \sup \{ |x-y| : x, y \in S_i \}$ and $\alpha_m =
\Gamma (\frac{1}{2})^m / \Gamma [(\frac{m}{2}) +1]$, the volume of the
unit ball in $\BBR^m$. Hausdorff measure is an outer measure which is
countably additive on the Borel sets of $\BBR^n$. It serves as area
measure for subsets. If the set $A$ above is $m$-rectifiable, the
coverings above can be restricted to balls or cubes [F,
Sect.~3.2.26]. For a closed set $A$, [F, Sect.~3.2.39] shows $\BH^m(A)
= \lim_{\epsilon \to 0} \lambda^n \{ x: \text{dist}(x,A) < \epsilon \}/
\alpha_{(n-m)} \epsilon^{n-m} $, thus justifying the heuristic
definition of area measure in Example A of Section 1.

To actually compute area measure, the Jacobian is an essential tool.
Call $f: \BBR^m \to \BBR^n$ differentiable at $x \in \BBR^m$ if there
exists a linear map $L: \BBR^m \to \BBR^n$ with
\begin{equation*}
\lim_{h \to 0} |f(x+h) - f(x) - L(h)| / |h| = 0.
\end{equation*}
The linear map $L$ is denoted $Df(x)$ when it exists. A celebrated
theorem of Rademacher [F, Sect.~3.1.6] says that a Lipschitz function
is differentiable at $\lambda^m$ a.e. $x \in \BBR^m$. For a
differentiable function, $Df$ can be computed using partial
derivatives $D_i(x) = \lim_{h \to 0} (f(x_1, \ldots, x_i+h, \ldots,
x_m) - f(x))/h$. As usual, the derivative matrix is
\begin{equation*}
(D f(x) )_{i, j} = D_i f_j(x) \qquad 1 \leq i \leq m, \ 1 \leq j \leq n
\end{equation*}

\noi If $f: \BBR^m \to \BBR^n$ is differentiable at $x$, the $k$-dimensional
Jacobian $J_k f(x)$ may be defined as the 
norm of the derivative matrix [F, page 241]. Geometrically 
$J_k f(x)$ is defined as the
maximum $k$-dimensional volume
of the image under $Df(x)$ of a unit $k$-dimensional cube in $\BBR^m$
(the maximum over all possible rotations of the cube under orthogonal
rotations in ${\cal O}_m$ \cite[p. 25]{Morgan}). As usual, if rank $Df(x) <
k,\ J_kf(x) = 0$. If rank $Df(x) = k$, then $(J_kf(x))^2$ equals the sum
of squares of the determinants of the $k \times k$ submatrices of
$Df(x)$. Usually, $k=m$ or $n$. Then $(J_kf(x))^2$ equals the
determinant of the $k \times k$ product of $Df(x)$ and its
transpose. If $k=m=n, \ J_kf(x)$ is the absolute value of the
determinant of $Df(x)$.

\subsection{The Area Formula}\label{DEF.b}

The basic area formula [F, Sect.~3.2.5] is a useful extension of the
  classical change of variables formula of calculus.

\noi \textbf{Theorem: Area Formula} \qquad {\em If $f: \BBR^m \to \BBR^n$ is
Lipschitz and $m \leq n$, then
\begin{equation}
\int_A g(f(x))J_m f(x) \lambda^m (dx) = \int_{\BBR^n} g(y) N (f | A, y)
\BH^m (dy)
\end{equation}
whenever $A$ is $\lambda^m$ measurable, $g: \BBR^n \to \BBR$ is Borel,
and $N (f | A, y) = \# \{ x \in A : f(x) = y \}$.}
\medskip

\noi\textit{Remarks}

1. In this paper, $f$ is usually a parameterization of a submanifold
$\CM$, so $f$ is $1-1$ and the right-hand integral is the surface area
integral of $g$ over $f(A)$. The left side shows how to carry out this
integral using Lebesgue measure on $\BBR^m$ and the Jacobian. It shows
that sampling from the density $J_mf(x)$ (normalized) on $\BBR^m$ and
then mapping onto $\CM$ via $f$ gives a sample from the area measure.
\smallskip

2. There are many extensions and refinements of the area formula [F,
  Sect.~3.2]. In particular [F, Sect.~3.2.20] extends things to
  approximately differentiable functions and [F, Sect.~3.2.46] extends
  from Euclidean space to Riemannian manifolds.  \medskip

\noi \textbf{Example 1A continued: The Curved Torus} \quad For the
parameterization given in Example 1A, the curved torus is the Lipschitz
image of $\{ 0\leq\theta, \psi<2\pi \}$, with
\begin{equation}
f(\theta,\psi)=  (R+ r \cos (\theta)) \cos (\psi), ( R+ r
\cos(\theta)) \sin (\psi), \ r \sin (\theta))
\end{equation}
\begin{equation}
Df(\theta,\psi) = \left[ \begin{array}{cc}
 -  r  \sin (\theta) \cos (\psi) &- ( R+ r \cos(\theta)) \sin (\psi) \\
\noalign{\medskip}- r \sin (\theta) \sin (\psi) & ( R+ r \cos (\theta) ) \cos (\psi) \\
\noalign{\medskip} r \cos (\theta) &0
\end{array} \right] 
\end{equation}
\begin{equation}
J_2^2f(\theta,\psi) = \text{det} \left[ \begin {array}{cc}
r^2 & 0\\ \noalign{\medskip} 0 & (R+r \cos(\theta))^2
\end {array} \right] = r^2(R+r \cos(\theta))^2
\end{equation}
\medskip

As explained in
Section~\ref{DEF}, $\CM$ is parametrized by ${\cal U}= \{ \theta,
\psi, \ 0 \leq \theta, \ \psi < 2\pi \}$ and the task reduces to
sampling $(\theta, \psi)$ from the density $g(\theta, \psi) = (\frac{1}{
4\pi^2} )(1+ (r/R) \cos \theta)$. A random number generator outputs
points that we assume are uniformly distributed on $[0,1]$ and
the task reduces to converting these into a sample from $g$. From the
form of $g$, the measure factors into the uniform density for $\psi$
on $[0,2\pi)$ and the density
\begin{equation*}
g_1(\theta)= \frac{1}{2\pi} \Big ( 1+ \frac{r}{R} \cos \theta \Big ) \quad
\text{on } 0 \leq \theta < 2\pi.
\end{equation*}

\noi We may sample points from $g_1$ by rejection sampling
\citep{Hammersley:1964}. The function $(1 + (r/R) \cos \theta)$ is enclosed
in the box $0 \leq \theta < 2\pi, [1 - (r/R) < \eta < 1+ (r/R)]$. Choose
points $(\theta, \eta)$ uniformly in this box from two-dimensional
Lebesgue measure. This uses two calls to the underlying uniform random
number generator. If $\eta < 1 + (r/R) \cos \theta$, output $\theta$. If
not, choose again, continuing until the condition holds. The resulting
$\theta$ is distributed as $g_1$. Sample code for this is in algorithm 1.

\begin{algorithm}
\begin{verbatim}
            reject=function(n=100,r=0.5,R=1){
#Rejection sampler
            xvec=runif(n,0,2*pi)
            yvec=runif(n,0,1/pi)
            fx=(1+(r/R)*cos(xvec))/(2*pi)
             return(xvec[yvec<fx])   }
\end{verbatim}
\caption{Rejection Sampling yielding $g_1$.}
\label{alg:display1}
\end{algorithm}

What we get is a density with support  $[0,2\pi]$.
See Figures 1 and 2 below.
\smallskip
\begin{figure}[h!]
\begin{center}
   \includegraphics[width=3in]{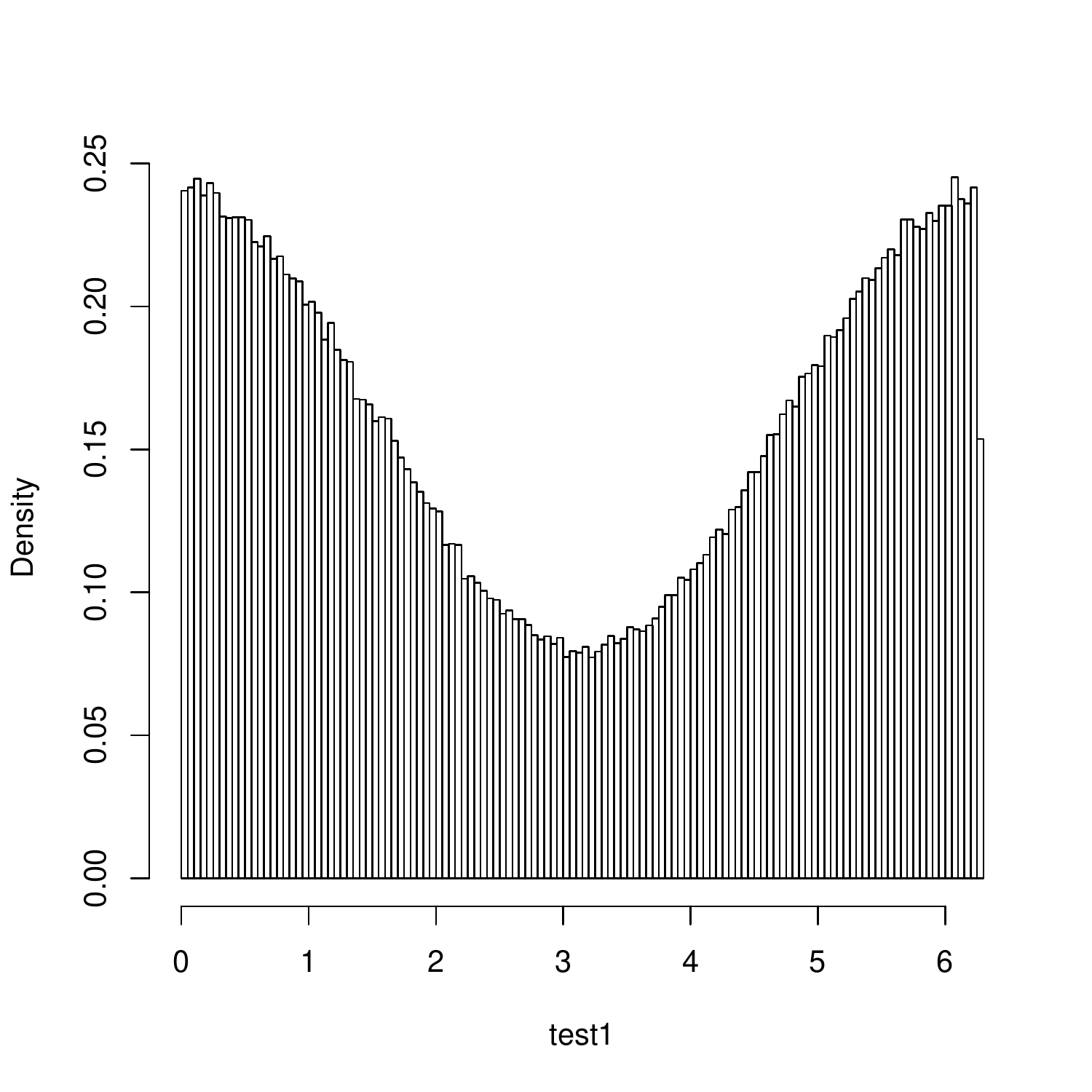}\\
   
 \end{center}
 \caption{Rejection sampling density proportional to $1+\frac{r}{R}cos(\theta)$} 
 \end{figure}

%
\smallskip

\smallskip
\begin{figure}
\begin{center}
     \includegraphics[width=4in]{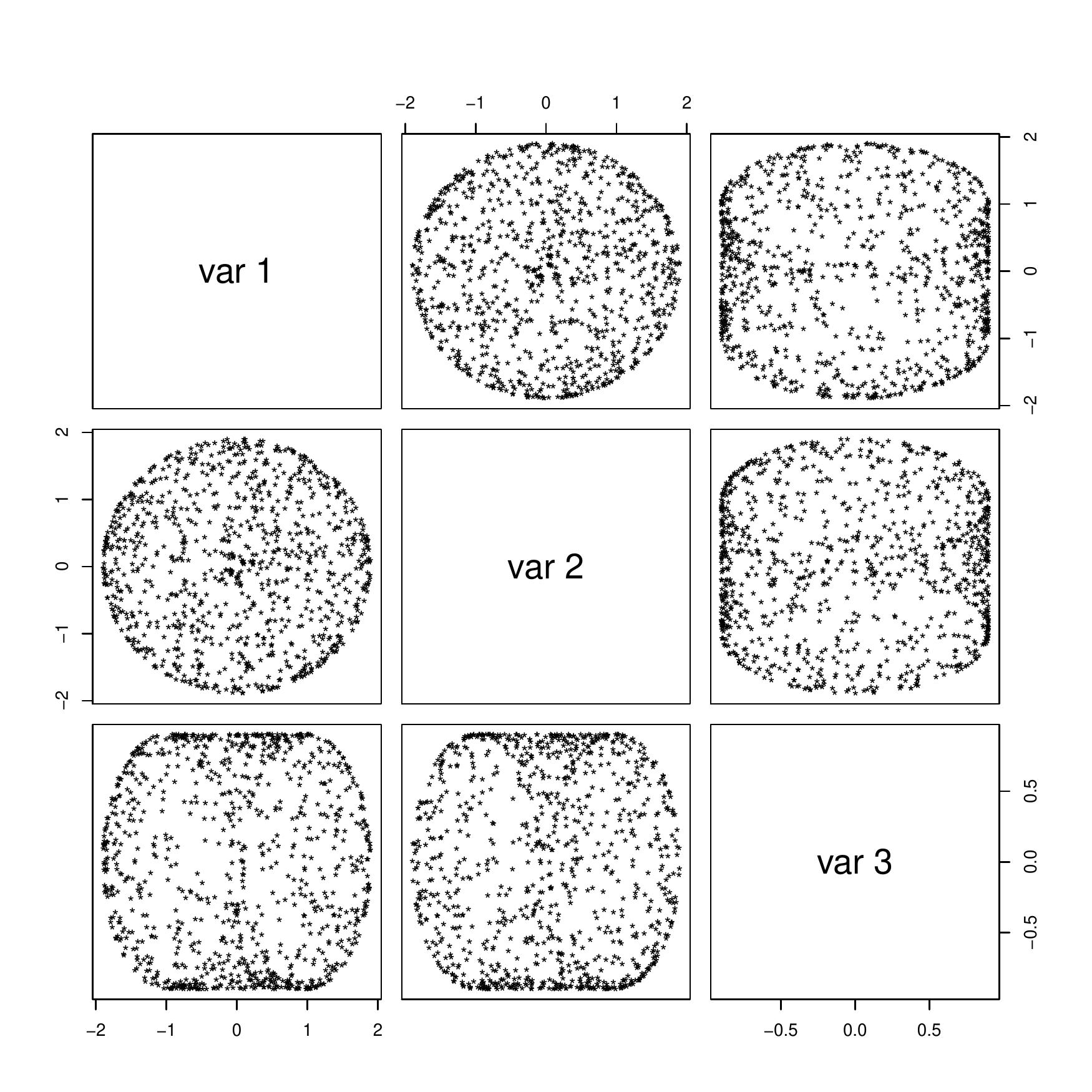}\\
   {Correctly generated points uniformly on the torus}
 \end{center}
\begin{center}
     \includegraphics[width=4in]{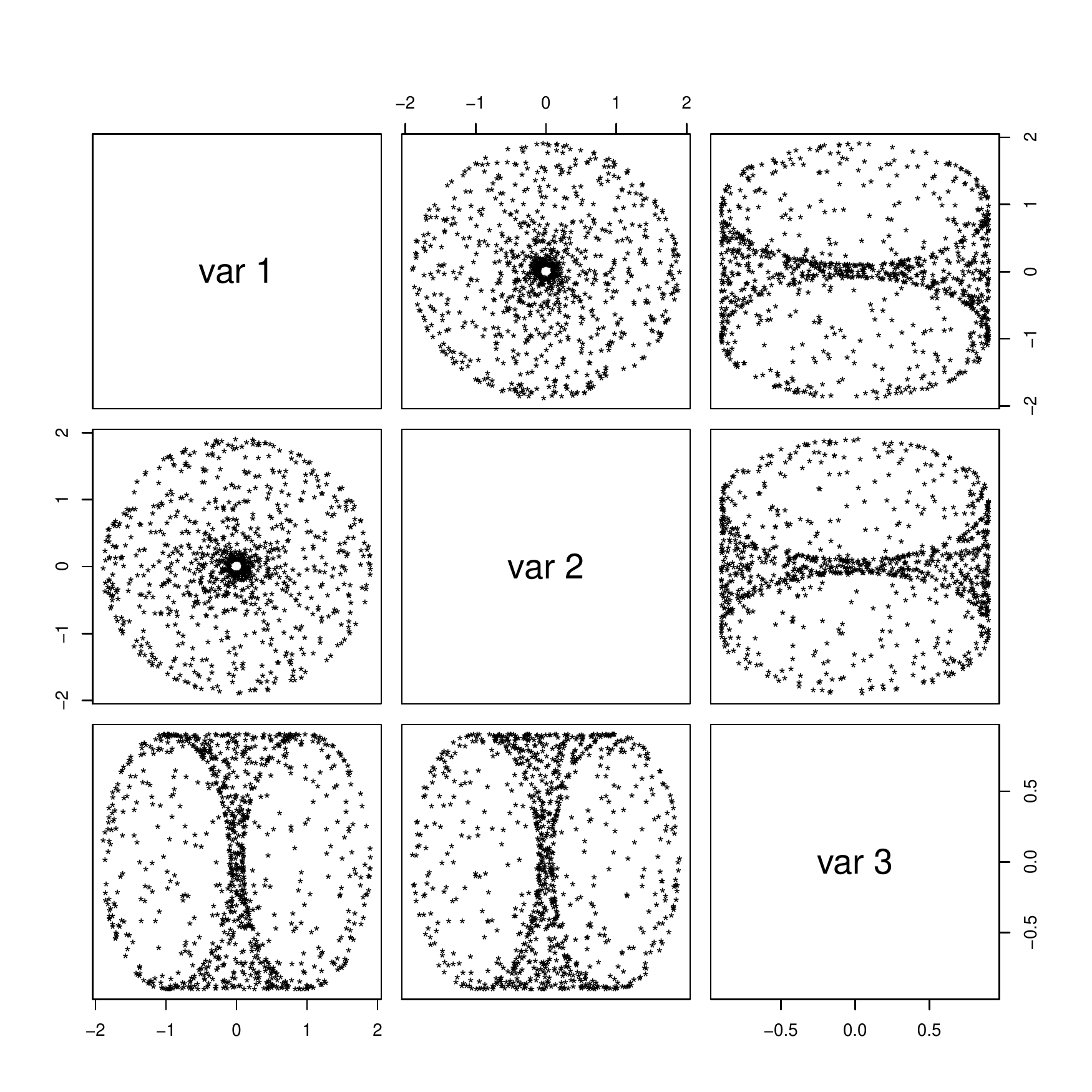}\\
   {Basic Uniform on Parameters. }
 \end{center}
 \caption{Top figure shows a 3D representation of a sample of size 1000 with parameters
 $R=1$, $r=0.9$, the lower figure shows the incorrectly sampled points, although the difference
 is not obvious visually, standard tests pick up the difference between the two distributions.}
 \end{figure}

\noi \textbf{Example 1B continued: Sum and Product Fixed} \quad Here
\begin{equation*}
\CM = \Big \{ (x_1,\ldots,x_n); \ \ x_i >0, \ \sum_{i=1}^n x_i=S, \ \prod_{i=1}^n x_i=P \Big \}.
\end{equation*}

\noi The constraints $S,P$ satisfy $0<P^{1/n} \leq S/n$ because of the
arithmetic-geometric mean inequality. Any such $S,P$ can occur. To
find a parameterization of $\CM$ consider the projection
\begin{equation*}
\begin{aligned}
\Pi : \CM &\to \BBR^{n-2}\\
(x_1, \ldots, x_n) &\to (x_3, \ldots, x_n)
\end{aligned}
\end{equation*}

\noi Let $s = x_3 + \cdots + x_n = S-t$ with $t \geq 0$ and $x_3 x_4
\cdots x_n = p$. The equations $x_1 + x_2 =t, x_1 x_2 = P/p$ have a
positive real solution if and only if $t^2 \geq 4P/p$. In this case
the solution is the pair
\begin{equation*}
\{ x_1, x_2 \} = \bigg ( t \pm \sqrt{t^2 - \frac{4P}{p}} \bigg ) \Big / 2.
\end{equation*}

\noi One way to parametrize $\CM$ is to define
\begin{equation}
\CM^+ = \{ x \in \CM : x_1 \geq x_2 \}, \quad \CM^- = \{ x \in \CM :
x_1 < x_2 \}
\label{eq2.5}
\end{equation}

\noi Define
${\cal U} = \{ (x_3, \ldots, x_n) : x_i >0, s < S, p < 4P/(S-4)^2 \}
\quad s = \sum_{i=3}^n x_i, \ p= \prod_{i=3}^n x_i;$

 $f: {\cal U} \to \CM^+$ is defined by
\begin{equation}
f(x_3, \ldots, x_n) = (f_1(x_3, \ldots, x_n ),f_2(x_3, \ldots, x_n ),x_3, \ldots, x_n )
\label{2.6}
\end{equation}
$$\mbox{with } f_1(x_3, \ldots, x_n) =  \frac{(S-\sum_{i=3}^nx_i) + \sqrt{(S-\sum_{i=3}^nx_i)^2 -
    \frac{4P}{\prod_{i=3}^nx_i}}}{2}$$
$$\mbox{    and }
    f_2(x_3, \ldots, x_n) =  \frac{(S-\sum_{i=3}^nx_i) - \sqrt{(S-\sum_{i=3}^nx_i)^2 -
    \frac{4P}{\prod_{i=3}^nx_i}}}{2}
    $$

The derivative is the $n \times (n-2)$ matrix
\begin{equation}
Df = \left[
\begin{array}{cccc}
D_3f_1 & D_4f_1 & \cdots & D_nf_1 \\
D_3f_2 & D_4f_2 & \cdots & D_nf_2 \\
1 & 0 & \cdots & 0 \\
0 & 1 & \cdots & 0 \\
  &   & \vdots &   \\
0 & 0 & \cdots & 1 \\
\end{array} \right]
\label{eq2.1}
\end{equation}
$(J_{n-2}f(x))^2 = \text{det} ((Df)^TDf)$ is the determinant of a matrix
of form $I_{n-2} + V V^T + W W^T$ with $V^T, W^T$ the first and second
rows of $Df$. A well-known determinant identity reduces this to a $2
\times 2$ determinant; if $B$ is $p \times m$ and $C$ is $m \times p$
then
 $\text{det} (I_p + BC) = \text{det} (I_m + CB)$. It
follows that

 \begin{equation}
 ((J_{n-2}f(x))^2 = \text{det} \bigg ( I_2 + 
\begin{pmatrix}
V^TV & V^TW \\
V^TW & W^TW
\end{pmatrix} \bigg ).
\label{eq2.2}
\end{equation}
\smallskip

\noi To summarize:

\begin{prop}\quad The density of the $(n-2)$-dimensional
area measure $\BH^{n-2}$ on the submanifold $\CM^+$ in (\ref{eq2.5})
parametrized by $f: {\cal U} \to \CM^+$ is $J_{n-2}f(x)$ of
(\ref{eq2.2}), above
with $V,W$ the first two rows of matrix (\ref{eq2.1}).
\label{prop2.1}
\end{prop}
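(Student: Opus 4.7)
The plan is to apply the Area Formula from Section~\ref{DEF.b} directly to the parametrization $f:{\cal U}\to\CM^+$. For a Borel test function $g:\BBR^n\to\BBR_{\geq 0}$, the formula gives
\begin{equation*}
\int_{\cal U} g(f(x))\, J_{n-2}f(x)\,\lambda^{n-2}(dx) = \int_{\BBR^n} g(y)\, N(f\mid {\cal U},y)\,\BH^{n-2}(dy).
\end{equation*}
If I can show that $f$ is Lipschitz on ${\cal U}$ (or on exhausting subsets) and that $f$ is a bijection from ${\cal U}$ onto $\CM^+$, then $N(f\mid {\cal U},y)$ is the indicator of $\CM^+$ and the right side reduces to $\int_{\CM^+} g(y)\,\BH^{n-2}(dy)$. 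Reading off densities proves the claim.

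Injectivity is immediate from the construction: given $y=(y_1,\dots,y_n)\in\CM^+$, the inverse image is forced to be $(x_3,\dots,x_n)=(y_3,\dots,y_n)$, and once $(x_3,\dots,x_n)$ is fixed, the pair $\{x_1,x_2\}$ is determined as the two roots of $z^2-tz+P/p=0$ with $t=S-\sum_{i\geq 3}x_i$ and $p=\prod_{i\geq 3}x_i$; the side condition $x_1\geq x_2$ selects $f_1$ as the larger root and $f_2$ as the smaller. The image is precisely $\CM^+$ by the same quadratic argument together with the constraints defining ${\cal U}$.

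For the Jacobian, I will exploit the block structure of (\ref{eq2.1}): the bottom $(n-2)\times(n-2)$ block is the identity, so with $V^T$ and $W^T$ denoting the first and second rows of $Df$,
\begin{equation*}
(Df)^T(Df) = I_{n-2} + VV^T + WW^T = I_{n-2} + B\,B^T,\quad B=[\,V\mid W\,]\in\BBR^{(n-2)\times 2}.
\end{equation*}
Applying the determinant identity $\det(I_p+BC)=\det(I_m+CB)$ with $C=B^T$ collapses the $(n-2)\times(n-2)$ determinant to the $2\times 2$ determinant
\begin{equation*}
\det\!\left(I_2 + \begin{pmatrix} V^TV & V^TW \\ V^TW & W^TW \end{pmatrix}\right),
\end{equation*}
which is exactly (\ref{eq2.2}). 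Since $(J_{n-2}f)^2=\det((Df)^T Df)$ when $\mathrm{rank}\,Df=n-2$ (which holds throughout ${\cal U}$ because of the identity block), this identifies the Jacobian.

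The one delicate point is the Lipschitz hypothesis needed by the Area Formula. Because $f_1,f_2$ contain the factor $\sqrt{(S-\sum x_i)^2-4P/\prod x_i}$, the map $f$ is only locally Lipschitz on the open set ${\cal U}$ and degenerates at the boundary locus where the radicand vanishes (the ``$x_1=x_2$'' seam glueing $\CM^+$ to $\CM^-$). The cleanest way to handle this is to exhaust ${\cal U}$ by compact sets ${\cal U}_k\uparrow{\cal U}$ on which the radicand is bounded below by a positive constant, apply the Area Formula on each ${\cal U}_k$ (where $f$ is genuinely Lipschitz), and pass to the monotone limit using that the excluded seam has $\BH^{n-2}$-measure zero in $\CM$. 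This is the main technical step; the injectivity and the determinant manipulation are essentially formal.
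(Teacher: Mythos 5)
Your proposal is correct and follows essentially the same route as the paper: read off $J_{n-2}f$ from the block structure of $Df$, collapse $\det(I_{n-2}+VV^T+WW^T)$ to a $2\times 2$ determinant via $\det(I_p+BC)=\det(I_m+CB)$, and invoke the Area Formula with $f$ injective onto $\CM^+$. Your compact-exhaustion fix for the merely locally Lipschitz $f$ matches the paper's Remark~3, which decomposes ${\cal U}$ into countably many pieces on which $f$ is Lipschitz and notes the Jacobian formula is local.
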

\medskip

\noi{\bf Remarks}

1. Up to sets of $\BH^{n-2}$ measure $0$, a similar result holds for
$\CM^-$. Since $\CM^-$ and $\CM^+$ patently have the same area
measure, it is easy to sample from $\CM$ using an additional coin flip
to randomize the first two coordinates.

2. Of course, any $(n-2)$-tuple of  coordinates can be used for the
parameterization. In practical simulation, it might be wise to sample
from $\CM$ as above and follow this by a random permutation of the
coordinates.

3. The function $f$ defined in (\ref{2.6}) , is only locally Lipschitz
because of $p$ in the denominator. However, ${\cal U}$ may be
decomposed into a countable union of pieces with $f$ Lipschitz on each
piece. Because the formula for $J_nf$ is local, the proposition is
true as stated. 

Given $S, P$, the manifold $\CM^+$ is parametrized
by $\cal U$ of Example 1B. The task of sampling from area measure on
$\CM$ is reduced to sampling from $J_{n-2}f(x)$ in $\cal U$. One
problem here is that although  $z=\int_{\cal U} J_{n-2}f(x) \lambda^{n-2}(dx) <
\infty$ and $J_{n-2}f/z$ is a probability density on $\cal U$, the
value of $z$ is unknown. This standard problem may be approached by
the Metropolis algorithm, the Gibbs sampler,
importance sampling, or by the hit-and-run algorithm in many
variations (see \cite{LiuBook}, \cite{HitandRun} for background). 
Here, we briefly explain the Metropolis
algorithm for sampling from $J_{n-2}f$. This generates a Markov chain $X_0,
X_1, X_2, \ldots$ starting from $X_0=x_0$, a fixed point in $\cal
U$. From $X_n=x \in {\cal U}$, we propose $y \in \BBR^M$, choosing
$y=x+\epsilon$ with $\epsilon$ chosen (say) uniformly from a unit cube
centered at $x$. Then,
\begin{equation*}
X_{n+1} = \begin{cases}
y & \text{ with probability } \min \Big(\frac{J_{n-2}f(y)}{J_{n-2}f(x)}, 1 \Big)\\
x & \text{ otherwise.} \end{cases}
\end{equation*}

\noi Since $J_{n-2}f(y)$ is taken as $0$ outside $\cal U$, note that $X_{n+1}
\in {\cal U}$. Standard theory shows that for $n$ large, $P(X_n \in A)
\sim \int_A \frac{J_{n-2}f(x)}{z} \lambda^M(dx)$. Careful evaluation of how
large $n$ must be to make this approximation valid is an open research
problem both here and in most real applications of the Metropolis
algorithm (see \cite{Metropolis} and \cite{DLM}). A host of heuristics are available for
monitoring convergence; for adapting the choice of the
proposal for $\epsilon$  and for efficient use of the output. We will not discuss these further here.

Several further examples admitting an explicit parameterization, with
computations of $Jf$, are in \citet[Chap. 5]{Hubbard:2007} which is
enthusiastically recommended to newcomers. 

\subsection{Conditional Densities and the Co-Area Formula}\label{DEF.c}

Federer's co-area formula gives an explicit density for the
conditional distribution. The main tool is:

\noi\textbf{Theorem: Co-Area Formula} [F, Sect.~3.2.12] \qquad {\em
  Suppose that $\Phi: \BBR^M \to \BBR^N$ is Lipschitz with $M >N$. Then
\begin{equation}
\int_{\BBR^M} g(x) J_N \Phi(x) \lambda^M (dx) = \int_{\BBR^N}
\int_{\Phi^{-1}(y)} g(x) \BH^{M-N}(dx) \lambda^N (dy).
\label{th1}
\end{equation}
}

\noi In (\ref{th1}), $g$ is Lebesgue measurable from $\BBR^M \to \BBR$ and
$J_N\Phi$ is defined in Section 2.1.

Recall next the definition of a regular conditional probability. Let
$(\Omega, {\cal F}, P)$ be a probability space and ${\cal C} \subseteq {\cal F}$
a sub-sigma algebra. A function $P(w,dw)$ from $(\Omega \times {\cal F})$
into $[0,1]$ is a regular conditional probability for $P$ given ${\cal
C}$ if
\begin{subequations}
\begin{align}
&\text{For each } w \in \Omega, P(w, \cdot) \text{ is a probability measure
  on } {\cal F}.\label{list.a}\\
&\text{For each } F \in {\cal F}, \text{ the function } w \mapsto P(w,
F) \text{ is } \cal C \text{ measurable.}\label{list.b}\\
&\text{For }  C \in {\cal C}, F \in {\cal F}, P(C \cap F) = \int_C
  P(w,F) P(dw).\label{list.c}
\end{align}
\end{subequations}

Let $p(x)$ be a probability density on $\BBR^M$ with respect to
$\lambda^M(dx)$. Let $\Phi: \BBR^M \to \BBR^N$ be Lipschitz with
$M>N$. From Rademacher's Theorem, $\Phi$ is differentiable at almost
every $x$, and $J_N \Phi(x)$ can be computed by the usual rules.

\begin{prop}
Suppose that $J_N \Phi(x)$ exists and is strictly positive for all $x$
where $p(x) > 0$. Then

\begin{enumerate}
\item The marginal density of $\Phi$ is absolutely continuous with
  density
\begin{equation*}
m(y) = \int_{\Phi^{-1}(y)} \frac{p(x)}{J_N\Phi(x)} \BH^{M-N} (dx)
\text{ with respect to } \lambda^M(dy).
\end{equation*}
\item If $m(y) \in \{ 0,\infty \}$, set ${\cal Q}(y,F) = \delta_{x*}(F)$
  for some fixed $x^* \in \BBR^M$. Else set
\begin{equation*}
{\cal Q}(y,F) = \frac{1}{m(y)} \int_{\Phi^{-1}(y) \cap F} \frac{p(x)}{J_N\Phi(x)} \BH^{M-N} (dx).
\end{equation*}
\end{enumerate}
\label{prop2}

Set $P(x,F) = {\cal Q} (\Phi(x),F)$. Then $P$ is a regular conditional
probability for $P(dx) = p(x) \lambda^M (dx)$ given ${\cal
  C}=\Phi^{-1}({\cal B})$
with ${\cal B}$ the Lebesgue measurable sets in $\BBR^N$.
\end{prop}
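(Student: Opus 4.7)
The plan is to verify the three defining properties (\ref{list.a})--(\ref{list.c}) of a regular conditional probability for the kernel $P(x,F)=\mathcal{Q}(\Phi(x),F)$, with the co-area formula as the central tool, and to establish Part 1 as a warm-up. For Part 1, I would write
$$P(\Phi^{-1}(B)) = \int_{\BBR^M}\mathbf{1}_B(\Phi(x))\,\frac{p(x)}{J_N\Phi(x)}\,J_N\Phi(x)\,\lambda^M(dx)$$
(legal since $p(x)>0 \Rightarrow J_N\Phi(x)>0$ by hypothesis) and apply the co-area formula to $g(x)=\mathbf{1}_B(\Phi(x))\,p(x)/J_N\Phi(x)$. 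Because $\Phi\equiv y$ on $\Phi^{-1}(y)$ the slice integrand factorizes and one reads off $P(\Phi^{-1}(B))=\int_B m(y)\,\lambda^N(dy)$.

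Property (\ref{list.a}) is immediate: the very definition of $m(y)$ forces $\mathcal{Q}(y,\BBR^M)=1$ when $0<m(y)<\infty$, and $\mathcal{Q}(y,\cdot)=\delta_{x^*}$ is trivially a probability measure otherwise; countable additivity in $F$ follows from that of $\BH^{M-N}$. For (\ref{list.c}), take $C=\Phi^{-1}(B)\in\mathcal{C}$ and $F\in\mathcal{F}$, and compute
$$\int_C P(x,F)\,P(dx) = \int_{\BBR^M}\mathbf{1}_B(\Phi(x))\,\mathcal{Q}(\Phi(x),F)\,\frac{p(x)}{J_N\Phi(x)}\,J_N\Phi(x)\,\lambda^M(dx).$$
Applying the co-area formula and using $\Phi\equiv y$ on each slice pulls $\mathbf{1}_B(y)\mathcal{Q}(y,F)$ outside the inner integral, reducing the right-hand side to $\int_B \mathcal{Q}(y,F)\,m(y)\,\lambda^N(dy)$. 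By the definition of $\mathcal{Q}$, this equals
$$\int_B\int_{\Phi^{-1}(y)\cap F} \frac{p(x)}{J_N\Phi(x)}\,\BH^{M-N}(dx)\,\lambda^N(dy),$$
and one further application of the co-area formula in reverse, now with integrand $\mathbf{1}_F(x)\mathbf{1}_B(\Phi(x))p(x)/J_N\Phi(x)$, collapses the double integral to $\int_{C\cap F}p(x)\,\lambda^M(dx)=P(C\cap F)$, as required.

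The main obstacle is property (\ref{list.b}), the $\mathcal{C}$-measurability of $x\mapsto P(x,F)$, which via $\mathcal{C}=\Phi^{-1}(\mathcal{B})$ reduces to Borel measurability of $y\mapsto \mathcal{Q}(y,F)$. The co-area formula as stated only gives equality of integrals, so a priori the slice function $y\mapsto\int_{\Phi^{-1}(y)\cap F}p/J_N\Phi\,\BH^{M-N}(dx)$ is defined only up to $\lambda^N$-null sets, and without a genuinely Borel representative the kernel is not yet a bona fide regular conditional probability. I would invoke Federer's disintegration machinery (cf.\ [F, 2.6.2] and the slice-measurability assertions accompanying [F, 3.2.12, 3.2.22]) to select such a representative; then $\mathcal{Q}(y,F)$ is a pointwise ratio of Borel functions, with the $\lambda^N$-null exceptional set $\{m\in\{0,\infty\}\}$ handled by the fixed Dirac mass. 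The remainder is the honest double application of the co-area formula sketched above.
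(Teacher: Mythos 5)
Your proposal is correct and takes essentially the same route as the paper's proof: both establish (\ref{list.c}) by applying the co-area formula to $g(x)=\delta_{C\cap F}(x)\,p(x)/J_N\Phi(x)$ and exploiting that indicators of sets in ${\cal C}=\Phi^{-1}({\cal B})$ are constant on the slices $\Phi^{-1}(y)$, with the set $\{m(y)=\infty\}$ shown to be $\lambda^N$-null by taking $C=F=\BBR^M$. The only differences are cosmetic (you run the chain of equalities in the reverse direction, using the co-area formula twice rather than once plus the $C_0,C_\infty,C^+$ decomposition) and that you are more explicit about the Borel measurability of $y\mapsto{\cal Q}(y,F)$, which the paper dismisses as clear but which is indeed supplied by the measurability assertions built into [F, 3.2.12].
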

\begin{proof}
Clearly (\ref{list.a}) and (\ref{list.b}) are satisfied. To show (\ref{list.c}), fix $C \in {\cal
  C}$ and $F$ a Lebesgue measurable set in $\BBR^M$. Take $g$ in (2.1)
to be
\begin{equation*}
\delta_{C \cap F} (x) \frac{p(x)}{J_N\Phi(x)} \quad \text{ with
  $g(x)$ defined as $0$ if $p(x)=0.$}
\end{equation*}
Where $\delta_{C \cap F}$ denotes the indicator function of
the intersection $C \cap F$.

\noi The co-area formula shows
\begin{align*}
P(C \cap F) = \int_{C \cap F} p(x) \lambda^M (dx) &= \int_{\BBR^N}
\int_{\Phi^{-1}(y)} \delta_C (x) \delta_F (x) \frac{p(x)}{J_N\Phi(x)} \BH^{M-N}
(dx) \lambda^N (dy)\\
&= \int_C \int_{\Phi^{-1}(y)\cap F} \frac{p(x)}{J_N\Phi(x)} \BH^{M-N}
(dx) \lambda^N (dy).
\end{align*}

\noi Let $C_0 = \{ y: m(y) = 0 \}, C_\infty = \{ y: m(y) = \infty
\}, C^+ = (C_0 \cup C_\infty)^C$. Taking $C = F = \BBR^M$, we see
$\lambda^N (C_\infty) = 0$. For $y \in C_0, \int_{\Phi^{-1}(y) \cap F}
\frac{p(x)}{J_N\Phi(x)} \BH^{M-N} (dx) = 0$. Hence, the integrals equal
\begin{align*}
&\int_{C \cap C^+} \int_{\Phi^{-1}(y)\cap F} \frac{p(x)}{J_N\Phi(x)} \BH^{M-N}
(dx) \lambda^N (dy)\\
= &\int_{C \cap C^+} \frac{m(y)}{m(y)} \int_{\Phi^{-1}(y)\cap F} \frac{p(x)}{J_N\Phi(x)} \BH^{M-N}
(dx) \lambda^N (dy)\\
= &\int_C m(y) {\cal Q} (y,F) \lambda^N (dy)\\
= &\int_C P(x,F) P(dx)
\end{align*}
\end{proof}

\noi \textit{Remark} \quad Of course, $m(y)$ can be $0$,  if $\Phi^{-1}(y)$ is
empty or $p$ vanishes there. Similarly, $m(y)$ can be infinite:
consider (following Tjur [1972, Sect. 30]) a set of finite area in
$\BBR^2$ of the shape 
shown in Figure \ref{fig:shape}.
\begin{figure}[h!]
\begin{center}
\includegraphics[width=4in]{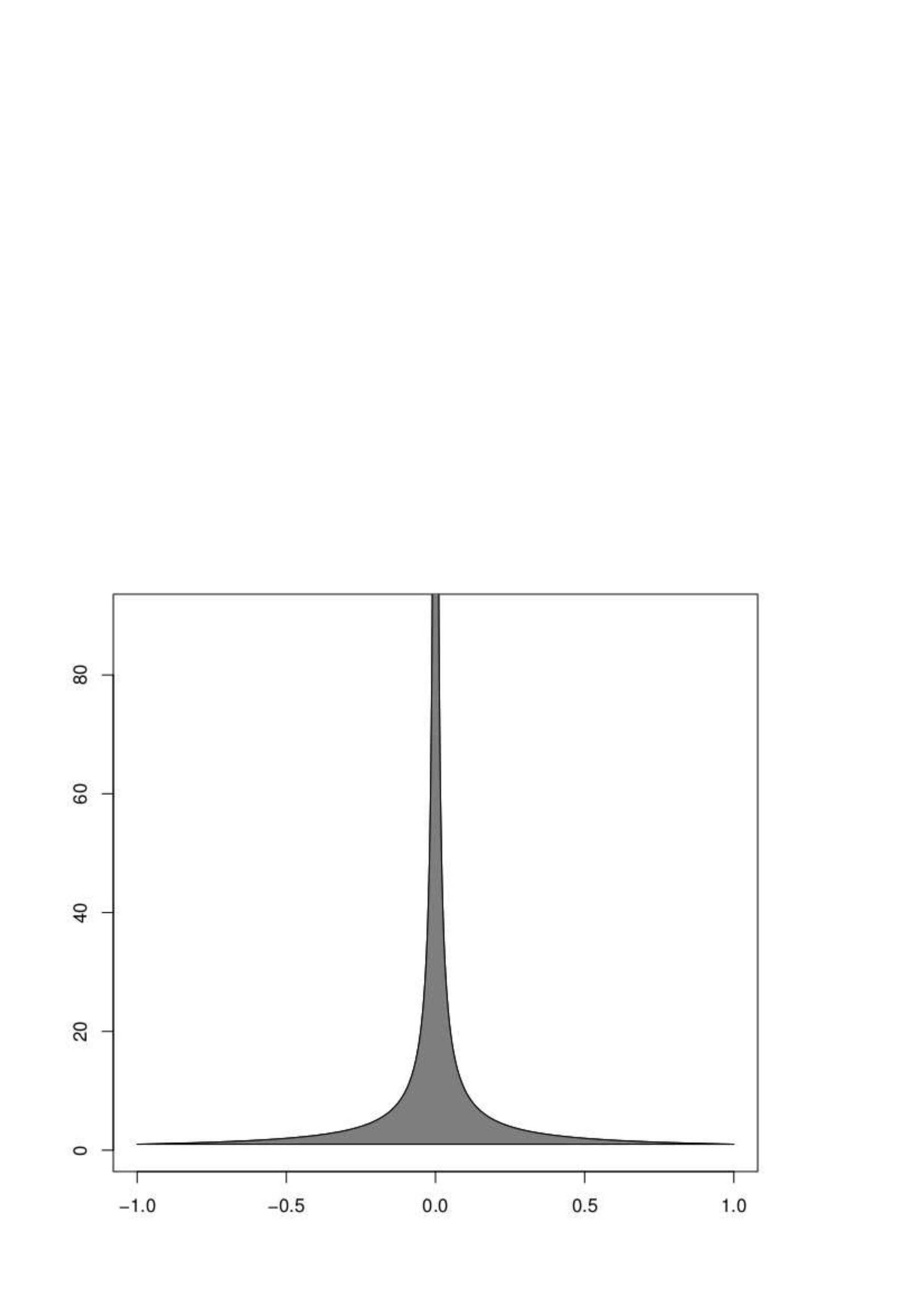}\\
\caption{ Instance of Infinite region.}
 \label{fig:shape}
 \end{center}
\end{figure}
\noi Let $p(x)$ be the normalized indicator of this set. Let
$\Phi(x,y) = x$, so $J_N\Phi(x)=1$. Then $m(0) = \infty.$

\noi{\bf Example 1A (continued)}: From (\ref{eq:1}) the torus is $\{(x,y,z) \in R^3 \}$
$$
x=  ( R+r \cos ( \theta ) ) \cos ( \psi ) , \;\;\; y=( R+r \cos ( \theta ) ) \sin ( \psi ), \;\;\;  z= r \sin ( \theta ) $$
$0 \leq \theta,\psi <2\pi$ for $R > r> 0$. 
What is
the conditional distribution in $(\theta,\psi)$ space given that $x=0$?
In the notation of Proposition \ref{prop2},
$$
p(\theta,\psi)=\begin{cases}
\frac{1}{2\pi}(1+\frac{r}{R}\cos(\theta))  & 0 \leq \theta,\psi <2\pi\\
0 & \mbox{ elsewhere}
\end{cases}
$$
The function $\Phi:R^2\rightarrow R$ is
$$\Phi(\theta,\psi)=
(R+r\cos(\theta))\cos(\psi)
$$
Thus
$$(J\Phi)^2=(r\sin(\theta)\cos(\psi))^2+((R+r\cos(\theta))\sin(\psi))^2
$$
$$
\Phi^{-1}(0)=\{ (\theta,\psi),\;\; 0\leq \theta < 2\pi , \;\; \psi \in \{\frac{\pi}{2},\frac{3\pi}{2}\}\}
$$
It follows that $J\Phi(\theta,\frac{\pi}{2})=J\Phi(\theta,\frac{3\pi}{2})=
R+r\cos(\theta)$. This is proportional to $p(\theta,\psi)$ and Proposition 2b
says that the conditional distribution is uniform on the two line
segments that make up
$\Phi^{-1}(0)$ and assigns equal mass to each segment.

\paragraph{Example 1B (continued)}
Consider the area measure on $\CM^+$
of (\ref{eq2.5}). Proposition \ref{prop2.1}
above
shows that $\CM^+$ is parametrized by a map $f$ from the set $U \subset R^{n-2}$
and gives an explicit expression for the corresponding probability density.
One standard method for sampling from this density is to use the Gibbs sampler.
This entails sampling from the conditional distribution given
the values at some of the coordinates.
One simple implementation which uses Proposition \ref{prop2}
is this:
$\CM^+$ is given as an embedded manifold in $\mathbb{R}^n$.
From $(x_1,x_2,\ldots,x_n)\in \CM^+$, choose three coordinates uniformly at random,
fix the remaining $(n-3)$ coordinates. The map $f$ of Proposition 
\ref{prop2.1}
composed with the projection 
onto the corresponding $(n-3)$ space gives a map $\Phi : {\cal U }\longrightarrow \BBR^{n-3}$.
The conditional density given $\Phi=y$ is explicitly given by Proposition 
\ref{prop2}. Here $\Phi^{-1}(0)$ is a one dimensional curve and the sampling problem reduces to a standard task. We omit further details.


\medskip

\medskip

\noi\textbf{Example 3C: How Not To Sample} \quad Here is a mistake to
avoid. Let $\CM$ be a compact embedded manifold. To sample from the
area measure, the following scheme presents itself. Suppose that for
each point $x \in \CM$ a neighborhood $\CN_x \subseteq \CM$ is specified
(e.g., a ball of specified radius on $\CM$). Suppose it is possible to
sample from the area measure restricted to $\CN_x$. It seems plausible
that this drives a Markov chain with area measure globally. This is an
error. Perhaps the easiest way to see through the problem is to
consider the discrete case:

Consider a finite connected undirected graph with vertex set ${\mathcal X}$ and edge set
$\cal E$. Let $\pi(x) >0,\ \sum_{x \in {\mathcal X}} \pi(x) = 1$ be a
probability distribution $\cal X$. Suppose for each point $x \in
{\mathcal X}$, a neighborhood $\CN_x$ is defined. These may be arbitrary
finite sets; we do not need $x \in \CN_x$, but will assume $y \in \CN_x
\leftrightarrow x \in \CN_y$. For example, we may take $\CN_x =
B_r(x)$, the $r$-ball using graph distance. A Markov chain on $\mathcal X$
is defined as follows: 

From $x$, choose $y \in \CN_x$ with probability $\pi$ restricted to
$\CN_x$. Thus
\begin{equation}
K(x,y)=\begin{cases}
\frac{\pi(y)}{\pi(N_x)} & \text{ if } y \in \CN_x\\
0 & otherwise \end{cases}
\label{exam3.1}
\end{equation}

\begin{lemma}
The chain (\ref{exam3.1}) is reversible with reversing measure
\begin{equation}
\sigma(x)= \frac{\pi(\CN_x) \pi(x)}{z}, \text{ with $z$ a normalizing
  constant.}
\label{exam3.2}
\end{equation}
\end{lemma}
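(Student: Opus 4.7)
The plan is to verify the detailed balance equation $\sigma(x)K(x,y)=\sigma(y)K(y,x)$ directly, since detailed balance with a probability measure implies reversibility (and in particular stationarity) of the chain. Summing over $y$ then recovers $\sigma K = \sigma$, so $\sigma$ as defined in (\ref{exam3.2}) is the stationary distribution, provided $z=\sum_{x\in\mathcal X}\pi(\mathcal N_x)\pi(x)$ so that $\sigma$ normalizes to $1$.

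First I would split into cases on whether $y\in\mathcal N_x$. If $y\notin\mathcal N_x$, then by the symmetry hypothesis $y\in\mathcal N_x \Leftrightarrow x\in\mathcal N_y$, we also have $x\notin\mathcal N_y$, so $K(x,y)=K(y,x)=0$ and detailed balance is trivial. If $y\in\mathcal N_x$, then substituting the definitions gives
\begin{equation*}
\sigma(x)K(x,y)=\frac{\pi(\mathcal N_x)\pi(x)}{z}\cdot\frac{\pi(y)}{\pi(\mathcal N_x)}=\frac{\pi(x)\pi(y)}{z},
\end{equation*}
and the $\pi(\mathcal N_x)$ factor cancels. The same computation applied to $\sigma(y)K(y,x)$, again using the symmetry $x\in\mathcal N_y$, produces the identical symmetric expression $\pi(x)\pi(y)/z$, completing detailed balance.

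There is really no obstacle here; the only subtle point is that the reversibility argument requires the symmetry $y\in\mathcal N_x\Leftrightarrow x\in\mathcal N_y$, which is exactly the hypothesis imposed in the statement, and the positivity $\pi(x)>0$ guarantees $\pi(\mathcal N_x)>0$ so the quotient defining $K$ is well defined. Connectedness of the graph together with the positivity of $\pi$ is what would additionally be needed if one wished to conclude uniqueness of $\sigma$ and convergence to it, but those are not asserted in the lemma itself.
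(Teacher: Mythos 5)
Your proof is correct and follows essentially the same route as the paper's: a case split on whether $y\in\CN_x$ (using the symmetry hypothesis to handle the zero case) followed by direct verification of detailed balance with the $\pi(\CN_x)$ factor cancelling. The additional remarks on normalization and on what would be needed for uniqueness are fine but not part of the paper's argument.
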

\smallskip

\begin{proof}
If $K(x,y) = 0$, then $K(y,x) = 0$, so reversibility holds. Otherwise
\begin{equation*}
\sigma(x) K(x,y)= \frac{\pi(\CN_x) \pi(x)}{z}
\frac{\pi(y)}{\pi(\CN_x)} = \frac{\pi(x) \pi(y)}{z} = \sigma(y) K(y,x).
\end{equation*}
\end{proof}

\noi\textit{Remarks}

1. Thus, unless $\pi(\CN_x)= $ constant, $\sigma(x) \neq \pi(x)$.

2. In the continuous setting, sampling locally from area measure $\BH$, this
chain has stationary density proportional to $\BH(\CN_x)$. An analysis
of rates of convergence for this walk on compact Riemannian manifolds
in \cite{Lebeau}.

3. On a curve, with distance measured by arc length, $\BH (B_r(x))$
{\em is} constant for $r$ suitably small because of the volume of
tubes theorem. However, this is no longer true for higher-dimensional
manifolds with non-constant Gaussian curvature.

4. We may use the Metropolis algorithm to change the stationary
distribution from $\sigma$ in (\ref{exam3.2}) to $\pi$. The chain is
$\CM(x,y) = \pi(y) \min ( \frac{1}{\pi(\CN_x)}, \frac{1}{\pi(\CN_y)} )$ for $x \neq y \in
\CN_x$. Note that this requires knowledge of $\pi(\CN_x), \ \pi(\CN_y)$.

\section{Exponential Families, Conditional Densities and the Co-Area
  Formula}\label{EXP}

One motivation for the current work is conditional testing in
statistical problems. This is a central topic of classical statistics
beginning with R.~A. Fisher's exact test for independence  in
contingency tables and the Neyman--Pearson theory of uniformly most
powerful unbiased tests for exponential families. The best general
reference for these topics is 
\cite[Chap. 4, 5, 10]{lehmann2005testing}
See also the
survey in \cite{DiaconisSturmfels} and the
techniques and references in \cite{Lindqvist,Lindqvist2006}. 

The problems
addressed in the present paper are a continuous analog. Section
\ref{EXP.a} below presents exponential families in a version
convenient for applications. Section \ref{EXP.b} briefly discusses
conditional densities and sufficiency. Section \ref{EXP.c} uses the
co-area formula to give a useful expression for the conditional
density, given a sufficient statistic, with respect to the area
measure. These formulae are applied in Section 4.

\subsection{Exponential Families}\label{EXP.a}

Many widely-used families of probability measures, such as the Gamma
family of Example 1B, have a common exponential form. Theorems and
properties can be derived generally and then applied in specific
cases. A good first reference for this material is \cite[Sect.~2.7]{lehmann2005testing}.
The specialist monographs of \cite{barndorff1979information}, \cite{Brown:1986} and \cite{Letac} 
may be supplemented by the references in \cite{DKSC} to give an
overview of this basic subject.

Let $T: \BBR^a \to \BBR^b$ be a measurable function. Let
$\Theta  \subseteq \BBR^b$ be a non-empty open set and
$\psi: \Theta  \to \BBR^b$ a measurable function. Let $f(x): \BBR^a \to
\BBR_+$ be measurable and suppose
\begin{equation*}
0 < z(\theta)= \int_{\BBR^a} f(x) e^{\psi(\theta)\bullet T(x)} \lambda^a
(dx) < \infty \text{ for each }\theta \in \Theta .
\end{equation*}

\noi\textbf{Definition} \quad The family of probability densities
\begin{equation}
P_\theta(x)=z^{-1}(\theta)f(x)e^{\psi(\theta) \bullet  T(x)} \qquad \theta \in \Theta 
\label{eq4.1}
\end{equation}

\noi is called the exponential family generated by $(f, \Theta , \psi, T)$.

For the Gamma family in Example 1B, $a=1, b=2, T(x)= \begin{cases}
  (x, \log x) & x>0\\ 0 & otherwise \end{cases}$
\begin{equation*}
\Theta =\BBR^2_+ = \{ (\sigma, a): \sigma, a > 0 \}, \quad
\psi(\sigma, a) = (-\frac{1}{\sigma}, a-1), \quad z(\theta)= \sigma^a\Gamma(a) \quad f(x)= \begin{cases} 1
  & \text{if } x>0\\ 0 & otherwise\end{cases}.
\end{equation*}

The exponential families here are a subclass, in having absolutely
continuous densities whose support does not depend on $\theta$.

\subsection{Sufficiency}\label{EXP.b}

The product measure on $(\BBR^a)^n$ generated by $P_\theta$ of (\ref{eq4.1}) has
density
\begin{equation*}
z(\theta)^{-n} \prod_{i=1}^n f(x_i) {e^{\psi(\theta)\bullet \sum_{i=1}^n T(x_i)}}.
\end{equation*}

\noi The function $\bar{T}= \sum_{i=1}^n T(x_i)$ is called a
sufficient statistic for the family. The references above  show that the distribution of the product measure
conditional on $\bar{T}$ does not depend on $\theta$. Conversely, the
Koopman--Pitman--Darmois theorem says if $P_\theta$ is a family of
measures on $\BBR^a$ with $T$ locally Lipschitz and for some $n \geq
2$, the distribution of $P_\theta^n$, conditional on $T$, does not
depend on $\theta$, then $P_\theta$ is an exponential family. See \cite{Hipp}
 for a careful statement; see \cite{diaconis1988sufficiency} for background and further
references on sufficiency.

For the Gamma family, $\bar{T}$ is equivalent to $S=\sum_{i=1}^n x_i,
P=\prod_{i=1}^n x_i$ as used thoughout.

\subsection{Conditional Densities and the Co-Area Formula}\label{EXP.c}

This dual to the area formula is explained in Section \ref{DEF.c} above. We may
use it directly to compute an expression for the conditional density
of an exponential family given a sufficient statistic.

\begin{thm}
  With notation as above, for $na > b$ consider an exponential family
  (\ref{eq4.1}) based on a Lipschitz $T: \BBR^a \to \BBR^b$. Let
  $\bar{T}: \BBR^{na} \to \BBR^b (= \sum_{i=1}^n T(x_i))$ and suppose
  $J_b\bar{T}(x)\neq 0$ for $\prod f(x_i) \neq 0$. Define $\CM_t=\{
  \bx \in (\BBR^a)^n :\; \bar{T}(\bx) =t \}$ Then, the conditional density on $\CM_t$
  with respect to the area measure is
\begin{equation}
W^{-1} \prod_{i=1}^n f(\bx_i)/J_b\bar{T}(\bx).
\label{eq4.2}
\end{equation}

\noi with the normalizing constant $W=W_t$ taken to be $\int \prod_{i=1}^n
f(\bx_i)/J_b\bar{T}(\bx) \BH^{M-N}(d\bx)$ if this integral is in $(0,\infty)$.
\end{thm}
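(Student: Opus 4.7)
The plan is to apply Proposition 2 directly with $M = na$, $N = b$, $\Phi = \bar T$, and $p(\bx) = z(\theta)^{-n}\prod_{i=1}^n f(x_i)\,e^{\psi(\theta)\cdot \bar T(\bx)}$ being the product density on $(\BBR^a)^n$. The hypothesis of Proposition 2 — that $J_N\Phi(\bx) > 0$ wherever $p(\bx) > 0$ — follows immediately from the standing assumption $J_b\bar T(\bx)\neq 0$ on $\{\prod f(x_i)\neq 0\}$, since $z(\theta)^{-n}e^{\psi(\theta)\cdot \bar T(\bx)} > 0$ everywhere. Rademacher's theorem plus the Lipschitz hypothesis on $T$ (hence on $\bar T$) guarantees the a.e.\ differentiability needed for $J_b\bar T$ to make sense.

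With Proposition 2 in hand, the conditional density on $\CM_t = \bar T^{-1}(t)$ with respect to $\BH^{na-b}$ is
\begin{equation*}
\frac{1}{m(t)}\cdot \frac{p(\bx)}{J_b\bar T(\bx)} \quad \text{where} \quad m(t) = \int_{\CM_t} \frac{p(\bx)}{J_b\bar T(\bx)}\,\BH^{na-b}(d\bx).
\end{equation*}
The key observation is that on the level set $\CM_t$ the statistic $\bar T(\bx)$ takes the constant value $t$, so the factor $z(\theta)^{-n}e^{\psi(\theta)\cdot t}$ is the same constant in both the numerator $p(\bx)/J_b\bar T(\bx)$ and the integral $m(t)$. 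It therefore cancels from the ratio, leaving
\begin{equation*}
\frac{\prod_{i=1}^n f(x_i)/J_b\bar T(\bx)}{\int_{\CM_t}\prod_{i=1}^n f(x_i)/J_b\bar T(\bx)\,\BH^{na-b}(d\bx)} = W_t^{-1}\prod_{i=1}^n f(x_i)/J_b\bar T(\bx),
\end{equation*}
which is exactly (\ref{eq4.2}). This also shows the answer is $\theta$-free, consistent with sufficiency.

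The only thing that needs a little care is the degenerate case: Proposition 2 already handles $m(t)\in\{0,\infty\}$ by defining $\CQ(t,\cdot)$ to be an arbitrary point mass, and we inherit that convention here via the assumption $W_t\in(0,\infty)$ stated in the theorem. No step is a real obstacle — the argument is essentially bookkeeping on top of the co-area formula — but the conceptual point worth emphasizing is exactly the cancellation above, which is why the conditional density does not involve $\theta$ or $\psi$ and exhibits $\bar T$ as a sufficient statistic in the strict measure-theoretic sense.
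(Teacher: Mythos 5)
Your proof is correct and follows essentially the same route as the paper: the paper applies the co-area formula directly to $g(\bx)=h(\bx)\prod_i P_\theta(x_i)/J_N\bar T(\bx)$ and reads off the conditional density after pulling the constant factor $e^{\psi(\theta)\cdot t}z(\theta)^{-n}$ out of the fiber integral, which is exactly the cancellation you identify. Routing the argument through Proposition~\ref{prop2} rather than re-invoking the co-area formula is only a difference of packaging, since that proposition is itself the co-area formula specialized to conditional densities.
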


\begin{proof}
In the co-area formula take $\Psi = \bar{T}: (\BBR^a)^n \to
\BBR^b$. Thus $M=na, N=b$. For $h : \BBR^M \to \BBR^N$ bounded
continuous, set
\begin{equation*}
g(\bx)=
\begin{cases}
\displaystyle\frac{h(\bx)}{J_N\Psi(\bx)} \displaystyle\prod_{i=1}^n P_\theta(x_i) & \text{ if }
J_N\Psi(\bx) \neq 0\\
0 & \text{ otherwise}
\end{cases}\ .
\end{equation*}

\noi Then $\Psi^{-1}(t)=\CM_t$ and the co-area formula shows that
$\CM_t$ has positive, finite total area measure for $\lambda^N$
a.e.t. Further
\begin{equation*}
\int h(\bx) \prod_{i=1}^n P_\theta(\bx_i) \lambda^M (d\bx) =
\int_{\BBR^b} e^{\Psi(\theta) \cdot t} z(\theta)^{-n} \int_{\CM_t}
\frac{h(\bx) \prod_{i=1}^n f(\bx_i)}{J_N\Psi(\bx)} \BH^{M-N}(dx) \lambda^N(dt).
\end{equation*}

\noi This formula says that (\ref{eq4.2}) is a regular conditional
probability for the product measure $\prod_{i=1}^n P_\theta(\bx)$
given $\bar{T}=t$.
\end{proof}

\noi\textit{Remarks}

1. Since the conditional density (\ref{eq4.2}) does not depend on
$\theta$, $\bar{T}$ is a sufficient statistic.

2. The calculation shows that the marginal density of $\bar{T}$ is $e^{\Psi(\theta) \cdot t} / z(\theta)^n W$ with
respect to $\lambda^b(dt)$. Thus the induced measures of $\bar{T}$
form an exponential family.
\medskip

\noi\textbf{Example: Gamma Family} \quad With $\bar{T}: \BBR^n \to
\BBR^2$ given by $\bar{T}(\bx) = (\sum_{i=1}^n x_i, \sum_{i=1}^n \log
x_i)$, for $n > 2$, 
\begin{equation*}
D\bar{T}(\bx)= 
\begin{bmatrix}
1 & 1 & \hdots & 1\\
\frac{1}{x_1} & \frac{1}{x_2} & \hdots & \frac{1}{x_n}
\end{bmatrix}, \quad J_2^2\bar{T}(\bx) = \sum_{i<j} \Big ( \frac{1}{x_i} -
\frac{1}{x_j} \Big) ^2.
\end{equation*}

\noi From Theorem 1, we may sample from the conditional
distribution of the Gamma family given $\bar{T}=t$ on $\CM_t$ by
sampling from the probability density (w.r.t. $\lambda^{n-2}$)
proportional to
\begin{equation*}
\frac{J_{n-2}f(x_3, \ldots, x_n)}{J_2\bar{T}(f(x_3, \ldots, x_n))}
\end{equation*}

\noi on $\CU$ and  $f$ defined in (\ref{2.6}), followed by randomizing the
first two coordinates by a fair coin toss.
\medskip

\section{Neyman's Smooth test and the Gibb's sampler}
This section illustrates a useful general procedure (the Gibbs
sampler) in a natural example: Neyman's smooth test for goodness of
fit. The problem reduces to sampling from an explicit density
$f(x_1,x_2,\ldots,x_n)$ on the following submanifold: fix $m$ and $p_1 \geq p_2 \geq \cdots p_m$. Let
$$\Mp=\{x_1,x_2,\ldots x_n, 0\leq x_i\leq 1, \sum_{j=1}^n x^i_j=p_i,1 \leq i\leq m
 \}.$$
 In Neyman's case, $m=4$, assume this for now. The idea  underlying our algorithm, 
 developed below, is
to pick a uniformly chosen subset
of $m+1=5$ coordinates with probability $1/{n \choose 5}$,
say the first five.
Set
$\bar{p}_i=\sum_{j=1}^5x_j^i$. The submanifold
 \begin{equation}
  {\mathcal M}_{\bar{p}}
 =\{ 
x_1,x_2,\ldots x_5, 0\leq x_i\leq 1, \sum_{j=1}^5x^i_j=\bar{p}_i,1 \leq i\leq 4
\}
 \label{eq5.0}
\end{equation}
is a curve 
which lies both on the submanifold  ${\Mp}$ and in $\mathbb{R}^5$.
We may sample from the conditional density
on the curve and replacing
$(x_1,x_2,\ldots,x_5)$ by the sampled values
gives a new point on
$\Mp$.

 Repeatedly choosing fresh five-tuples gives a connected
reversible Markov chain on $\Mp$ with $f$ as its stationary density .
In the present section we find it convenient
to work directly with the density of $f$ with respect to the area
measure, avoiding the extra step
of local coordinates.
Neyman's smooth test is developed in 4.1, the relevant conditional densities
are derived in 4.2 and 4.3 contains a study
of the ergodicity of
 this chain.
Section 4.4 develops an idea of \cite{Besag}  for valid testing 
with non ergodic chains.
\subsection{Neyman's Smooth test}
\label{sec4.1}
Consider investigating the following null hypothesis; 
fix $F$ a distribution  function of a continuous random variable. Let
\begin{equation}
H_0: X_1,X_2,X_3,\ldots X_n \sim iid \quad F
\end{equation}
Then
$$Y_i=F(X_i)$$ are iid uniform on $[0,1]$. \cite{Neyman} developed a test of $H_0$ based on
testing $\theta=0$ in the model
\begin{equation}
\label{eq5.2}
f_\theta(y)=z^{-1}e^{\theta_1y+\theta_2y^2+\theta_3y^3+\theta_4y^4},
\qquad 0\leq y\leq 1
\end{equation}
This test (and its modifications by \cite{FNDavid,Barton,Barton:1956})
 has been shown to have a good empirical track record and comes
in for repeated favorable
mention in Lehman and Romano's  survey of testing goodness of
fit \cite[chapter 9]{lehmann2005testing}.
That chapter also explains the difficulty of such omnibus testing problems.
One justication for this test is that if the data are from a {\em smooth} distribution $F$,
using a simple $\chi^2$ test loses information because it breaks the data into categorical bins,
losing the actual ordering of the bins.

Any smooth positive probability density $h(y)$ on $[0,1]$ can be expanded as
$$
h(y)=e^{log h(y)}=e^{\sum_{i=0}^\infty \theta_iy^i}
$$

The four parameter exponential family is a commonsense truncation of this non-parametric
model.  \cite{Fan} has developed tests based on $m$ term approximations with $m$ chosen adaptively from the data.

In the rest of this section we investigate the adequacy of the truncation (\ref{eq5.2}) (with $m=4$)
by testing if the model (4.3) fits the data.
 Thus given data $Y_1,Y_2,\ldots Y_n$ in $[0,1]$, we develop
conditional tests of the model (\ref{eq5.2}). These ideas work for every $m$ and could be used as input to Fan's adaptive procedure.
The four dimensional sufficient statistics for the family
(\ref{eq5.2}) is
$$
\mathbf{p}=(p_1,p_2,p_3,p_4),\qquad p_i=\sum_{j=1}^nY^i_j
$$
The conditional procedures explained in section 4.2
are based on the conditional distribution of the model $f_\theta$ given $\mathbf{p}$. This is supported on
\begin{equation}
\Mp=\{ (x_1,x_2,\ldots,x_n), 0\leq x_i \leq 1, \sum_{j=1}^n x_j^i =p_i, 1\leq i \leq 4 \}
\label{eq4.3}
\end{equation}
This is a compact $n-4$ dimensional submanifold of $[0,1]^n$.
To actually construct a test, a test statistic must be
chosen. Neyman's test of section \ref{sec4.1} was based on the L$^2$ norm of the averages of the first four orthogonal polynomials for the uniform distribution on $[0,1]$. Under
(\ref{eq5.2})  the sum of these norms should have been an approximate chi-square
(4)  distribution. 
 We may follow Neyman, using a further orthogonal polynomial as the test statistic but calibrating it
 with the exact conditional distribution.

\subsection{The Gibbs Sampler}
The Gibbs sampler is well developed in \cite{LiuBook}.
As usually explained, to sample from a probability density
$g(z_1,z_2,\ldots,z_n)$
on $\mathbb{R}^n$ one begins at a sample point $z_0=(z_1^0,z_2^0,\ldots,z_n^0)$
and changes coordinates sequentially: first  to $(z_1^1,z_2^0,\ldots,z_n^0)$
then to $(z_1^1,z_2^1,\ldots,z_n^0)$ ...then
$z_1=(z_1^1,z_2^1,\ldots,z_n^1)$.
The $ith$ change is made by sampling from the conditional distribution
of the $ith$ coordinate given all the rest. The one dimensional problem
is supposed to be easy to do. The transition from $z^0$ to $z^1$ is one step of the Gibbs sampler.
Proceeding as above to
$z^2$, $z^3$,$\ldots$ gives a Markov chain with $g$
as stationary distribution. 
In the present problem
\begin{enumerate}
\item[(a)]
It is not possible to change just one coordinate and stay on the surface
(\ref{eq5.0}). The minimal change is in five coordinates
resulting in the curve 
\begin{equation}
\{ (x_1,x_2,\ldots,x_5): 0\leq x_i  \leq 1 \;\; \sum_i^5=p_j
\}.
\label{eq4.4}
\end{equation}
\item[(b)]
Instead of random sampling, one can systematically run through all sets of five coordinates
using for instance a Gray code approach as in \cite{GrayCodes}. 
\item[(c)]
Sampling from a conditional distribution on the curve in $(a)$ is not so simple and instead a single Metropolis step is proposed. This is sometimes 
called `Metropolis on Gibbs' in the literature, for notational clarity we suppose that 
the five chosen coordinates are the first five. Let $P$ be the conditional distribution
for the model
(\ref{eq5.2}) on the submanifold (\ref{eq5.0}). Let $Q$ be the conditional measure on the curve
(\ref{eq4.4}). The following proposition determines the density of $Q$ with respect to arc-length.
\end{enumerate}
\begin{prop}
\label{prop5.1}
The measure $Q$ on the curve (\ref{eq4.4})  has density
with respect to arc-length 
$$q(x_1,x_2,x_3,x_4,x_5)=z^{-1}\sqrt{J_4^{-1}}\qquad z^{-1} \mbox{ a normalizing constant}$$
\begin{equation}
\label{eq5.4}
J_4=
\mbox{det}
\begin{pmatrix}
5& 2\bar{p}_1&3\bar{p}_2 & 4\bar{p}_3  \\
2\bar{p}_1 & 4\bar{p}_2 & 6\bar{p}_3& 8\bar{p}_4 \\
3\bar{p}_2 & 6\bar{p}_3 & 9\bar{p}_4 & 12\bar{p}_5\\
4\bar{p}_3 & 8\bar{p}_4 &12\bar{p}_5& 16\bar{p}_5\\
\end{pmatrix}
\qquad \bar{p}_i=\sum_{j=1}^5 x_j^i, 1\leq i \leq 5
\end{equation}
\end{prop}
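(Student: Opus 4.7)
The curve in (\ref{eq4.4}) is $\Phi^{-1}(\bar{p})$ for the map $\Phi:\BBR^5\to\BBR^4$ given by $\Phi_i(x)=\sum_{j=1}^5 x_j^i$, $1\le i\le 4$, which is smooth and hence Lipschitz on the compact cube $[0,1]^5$. The plan is to realise $Q$ as a conditional distribution on $\BBR^5$ to which Proposition \ref{prop2} applies directly, and then to compute $J_4\Phi$ and recognise the matrix displayed in (\ref{eq5.4}).

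For the reduction, observe that $\mathbf{p}=(p_1,p_2,p_3,p_4)$ is a sufficient statistic for the exponential family (\ref{eq5.2}) (Section \ref{EXP.b}), so $P$ does not depend on $\theta$ and we may take $\theta=0$. The ambient measure is then uniform on $[0,1]^n$ with independent coordinates, and $\bar{T}$ splits as $\bar{T}(x)=\Phi(x_1,\ldots,x_5)+\tilde{T}(x_6,\ldots,x_n)$. Conditioning first on the Gibbs-fixed values of $x_6,\ldots,x_n$ leaves $(x_1,\ldots,x_5)$ uniform on $[0,1]^5$, and the event $\{\bar{T}=\mathbf{p}\}$ becomes $\{\Phi(x_1,\ldots,x_5)=\bar{p}\}$. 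Hence $Q$ is the conditional distribution of the uniform measure on $[0,1]^5$ given $\Phi(x_1,\ldots,x_5)=\bar{p}$, and Proposition \ref{prop2} (applied with $p\equiv 1$, $M=5$, $N=4$) yields density proportional to $1/J_4\Phi(x)$ with respect to $\BH^1$ on the fibre, which on the smooth $1$-dimensional curve coincides with arc-length. It remains to identify $J_4\Phi$. In the paper's convention $D\Phi$ is the $5\times 4$ matrix with $(D\Phi)_{ji}=i\,x_j^{i-1}$, and since $N=\min(M,N)=4$ the formula of Section \ref{DEF.a} gives $(J_4\Phi(x))^2=\det((D\Phi)^{T}D\Phi)$. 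Direct computation gives
$$((D\Phi)^{T}D\Phi)_{i,k}=\sum_{j=1}^5 ik\,x_j^{i+k-2}=ik\,\bar{p}_{i+k-2},\qquad 1\le i,k\le 4,$$
with the convention $\bar{p}_0:=5$, reproducing exactly the matrix $J_4$ of (\ref{eq5.4}). The arc-length density of $Q$ is therefore $z^{-1}/\sqrt{J_4}=z^{-1}\sqrt{J_4^{-1}}$.

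The only delicate point is formalising the iterated conditioning on the two measure-zero events $\{\bar{T}=\mathbf{p}\}$ and $\{x_6,\ldots,x_n=\text{const}\}$; this is routine given the additive/independent decomposition of $\bar{T}$ above (alternatively one can apply the co-area formula just once to the joint map $x\mapsto(x_6,\ldots,x_n,\bar{T}(x))$), but one must also verify that $J_4\Phi\ne 0$ holds $\BH^1$-almost everywhere on the curve so that Proposition \ref{prop2} can be invoked. This follows from the Vandermonde-like structure of $D\Phi$, which has full rank $4$ whenever at least four of $x_1,\ldots,x_5$ are distinct --- a generic condition on the curve.
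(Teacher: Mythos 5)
Your proof is correct and follows essentially the same route as the paper: double conditioning reduces $Q$ to the conditional law of a product measure on $[0,1]^5$ given $\Phi=\bar p$, the co-area machinery (the paper invokes Theorem 1 of Section \ref{EXP.c}, which is just Proposition \ref{prop2} specialized to exponential families with base density $f\equiv 1$ on $[0,1]$) gives density proportional to $1/J_4\Phi$, and the Gram matrix $(D\Phi)^{T}D\Phi$ has entries $i\,k\,\bar p_{i+k-2}$ with $\bar p_0=5$. One small point: your general formula actually exposes a typo in the paper's display (\ref{eq5.4}) --- the $(4,4)$ entry should be $16\bar p_6$ rather than $16\bar p_5$ (consistent with the paper's own remark that the $(i,j)$ entry is $i\cdot j\,p_{i+j-2}$) --- so your claim of ``reproducing exactly'' the displayed matrix holds only modulo that correction.
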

\proof{
By the usual calculus of double conditioning, $Q$ is the conditional distribution
of the product measure $f_\theta^5$ on $[0,1]^5$
 given $\bar{p}_1,\bar{p}_2,\bar{p}_3,\bar{p}_4$. Now use Theorem 1 of section 3.3.
  The mapping $\bar{T}:[0,1]^5 \rightarrow
\mathbb{R}^4$ takes
$T(y_1,y_2,y_3,y_4,y_5)=(\bar{p}_1,\bar{p}_2,\bar{p}_3,\bar{p}_4)$.
Clearly the $5 \times 4$
derivative $D\bar{T}$ is
$$
D\bar{T}=
\begin{pmatrix}
1 & 2y_1 & 3y_1^2 &4y_1^3\\
1 & 2y_2 & 3y_2^2 &4y_2^3\\
1 & 2y_3 & 3y_3^2 &4y_3^3\\
1 & 2y_4 & 3y_4^2 &4y_4^3\\
1 & 2y_5 & 3y_5^2 &4y_5^3\\
\end{pmatrix}
$$
so that $J_4$ is given by 
(\ref{eq5.4}) as claimed.
}

\noi{\bf Remark}:\\
For general $m$, the density is proportional to $J_m^{-\frac{1}{2}}$
with $J_m$ having $i,j$ entry $i\cdot j p_{i+j-2}, 1\leq i,j\leq m$
The following algorithm combines the ideas above
to
give a reversible Markov chain for sampling from the conditional distribution of
the model \ref{eq5.2} on the manifold $\Mp$. From $\mathbf{x} \in \Mp$
 
\begin{enumerate}
\item Choose five coordinates uniformly at random. Without loss, suppose these are the first
five, calculate $\bar{p}_i=\sum_{j=1}^5 x_j^i,\qquad 1\leq i \leq 4$.
\item Pick a small parameter $\epsilon$, then
choose one of the five coordinates uniformly at random without loss, suppose the first coordinate
has been chosen. Change $x_1$ to $y_1=x_1+\epsilon_1$
with $\epsilon_1$ chosen uniformly in $[x_1-\epsilon,x_1+\epsilon]$. Solve  for $y_2,y_3,y_4,y_5$
so that $\mathbf{y}=(y_1,y_2,y_3,y_4,y_5) \in   {\mathcal M}_{\bar{p}}
$ as in (\ref{eq5.0}).
\item
Calculate $J_4(\mathbf{x})$, $J_4(\mathbf{y})$ from Proposition 3 above.
If $J_4(\mathbf{x})\geq J_4(\mathbf{y})$ the algorithm moves to $\mathbf{y}$.
If $J_4(\mathbf{x})< J_4(\mathbf{y})$
flip a coin with success probability
$$\sqrt{\frac{J_4(\mathbf{x})}{J_4(\mathbf{y})}}
$$
If success move to $\mathbf{y}$, otherwise stay at $\mathbf{x}$
\end{enumerate}
Remarks:
\renewcommand{\labelenumi}{\arabic{enumi}}
\begin{enumerate}
\item  For $m \leq 4$, calculations for solving the $\mathbf{y}$ can be done in closed
form as they involve at most quartic equations. For higher $m$ a variety
of numerical procedures are available.
\item Of course, if $\mathbf{y}$ in step (b) is outside $[0,1]^5$, the algorithm stays
at $\mathbf{x}$ .
\item We began studying the problem hoping to parametrize the curve
(\ref{eq5.0}) and sample directly from the arc length measure. This proved
impractical. The technique we have developed
seems easier and is applicable to general continuous
exponential families.
%
\end{enumerate}

\subsection{Ergodicity}
Let $P_j(x)=x_1^j+\ldots +x_n^j$ and $S$ be the set defined by
\begin{equation}
\label{eq:sample1}
0<x_1<\ldots <x_n<1,~~P_1(x)=c_1, \ldots, P_4(x)=c_4.
\end{equation}
The closure of $S$ will be denoted by $\bar{S}$.
We also assume that $1\ge c_1>c_2> c_3 >c_4>0$ which is a necessary condition for the existence of a solution to (\ref{eq:sample1}).  Assume that the system (\ref{eq:sample1}) has a solution.

\begin{lemma}
\label{lem:sample1}
Let $y\in S$ be a solution to (\ref{eq:sample1}).  Then there is a submanifold of dimension $n-4$ passing through $y\in S$.  Furthermore the orthogonal projection of $S$ near $y$ on any coordinate line $x_j$ contains a neighborhood of $y_j$.
\end{lemma}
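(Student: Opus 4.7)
The plan is to apply the implicit function theorem to the polynomial moment map $F:\mathbb{R}^n\to\mathbb{R}^4$ defined by $F(x)=(P_1(x),P_2(x),P_3(x),P_4(x))$, so the first assertion reduces to showing $DF(y)$ has full rank $4$. The derivative matrix has entries $(DF(x))_{k,i}=k\,x_i^{k-1}$ for $1\le k\le 4$ and $1\le i\le n$. Picking any four columns $i_1<i_2<i_3<i_4$, the resulting $4\times 4$ minor factors as $4!$ times a Vandermonde determinant $\prod_{\ell<\ell'}(y_{i_{\ell'}}-y_{i_\ell})$, which is nonzero because $y\in S$ has strictly increasing coordinates. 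Hence $DF(y)$ has rank $4$, and the regular value theorem yields a smooth $(n-4)$-dimensional submanifold $F^{-1}(c)$ near $y$; intersecting with the open set $\{0<x_1<\cdots<x_n<1\}$ (which contains $y$) gives the desired submanifold structure on $S$ near $y$.

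For the projection statement I would work at the level of tangent spaces. Since $T_yS=\ker DF(y)$, it suffices to prove that for each coordinate index $j$, the linear functional $v\mapsto v_j$ is not identically zero on $\ker DF(y)$; combining this with the implicit function theorem (the restricted map $x\mapsto x_j$ from $S$ to $\mathbb{R}$ is then a submersion at $y$) makes $\pi_j(S)$ contain a neighborhood of $y_j$. Equivalently, I must rule out the possibility that $e_j$ lies in the row span of $DF(y)$.

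Suppose, for contradiction, $e_j=\sum_{k=1}^4 a_k\,(k y_1^{k-1},\ldots,k y_n^{k-1})$. Reading off coordinates $i\ne j$ shows that the polynomial $p(t)=a_1+2a_2 t+3a_3 t^2+4a_4 t^3$ of degree at most $3$ vanishes at the $n-1$ distinct points $\{y_i:i\ne j\}$. Because the setup implicitly has $n\ge 5$ (fewer than five variables would make $S$ typically empty or zero-dimensional), we get $n-1\ge 4$ roots for a cubic, forcing $p\equiv 0$, hence all $a_k=0$ and $e_j=0$, a contradiction. This confirms the tangent-space surjection onto each coordinate line and finishes the proof.

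The main obstacle I anticipate is not the rank computation (which is a standard Vandermonde trick) but the transversality-to-coordinate-axes step: formulating it as a linear-algebra question on the row space of $DF(y)$ and then extracting the contradiction from a degree-versus-number-of-roots argument. Once that algebraic observation is in place, the geometric conclusion (openness of the coordinate projections) follows from a straightforward appeal to the submersion theorem applied to $\pi_j|_S$.
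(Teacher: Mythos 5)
Your proof is correct and follows essentially the same route as the paper: both establish that $DF(y)$ has rank $4$ via the Vandermonde structure of the moment map's Jacobian and then invoke the implicit function theorem. For the second assertion the paper simply notes that one can solve for any four coordinates in terms of the remaining $n-4$ (again a Vandermonde minor), while you show directly that $dx_j$ is nontrivial on $\ker DF(y)$ by a root-counting argument for a cubic; these are equivalent consequences of the same algebraic fact, so no substantive difference.
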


\begin{proof}
 We have $dP_j(x)=jx_1^{j-1}dx_1+\ldots +jx_n^{j-1}dx_n$.  Therefore to show the first assertion it suffices to show that the matrix
\begin{eqnarray*}
\begin{pmatrix}
1&1&\ldots &1\\ x_1&x_2&\ldots &x_n\\ x_1^2&x_2^2&\ldots &x_n^2\\ x_1^3&x_2^3&\ldots &x_n^3
\end{pmatrix}
\end{eqnarray*}
has rank 4 which is immediate.  The second assertion follows from the fact that the locally the system can be solved near $y$ as function of any $n-4$ coordinates. 
\end{proof}

\begin{lemma}
\label{lem:sample2}
Let $k\ge 6$ and $y\in S$ be a solution to (\ref{eq:sample1}).  Consider the solution of the system (\ref{eq:sample1}) subject to the additional requirements
\begin{eqnarray*}
x_j=y_j,~~~{\rm for}~~j\ge k.
\end{eqnarray*}
Then there is a submanifold of dimension $k-5$ of solutions passing through $y$.  For $k=6$ the solution is a curve and its projection on the coordinate line $x_i$, $1\le i\le 5$ contains a neighborhood of $y_i$.
\end{lemma}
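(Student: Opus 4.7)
The plan is to apply the implicit function theorem, essentially mimicking the argument for Lemma \ref{lem:sample1} but on a smaller set of free coordinates. Fixing $x_j = y_j$ for $j \ge k$ turns (\ref{eq:sample1}) into a system of four equations in the $k-1$ free variables $x_1,\ldots,x_{k-1}$, namely
\[
P_i(x_1,\ldots,x_{k-1}) = c_i - \sum_{j\ge k} y_j^i, \qquad 1\le i\le 4.
\]
The Jacobian of the left-hand side at $y$ is the $4\times(k-1)$ matrix obtained by keeping the first $k-1$ columns of the matrix displayed in the proof of Lemma \ref{lem:sample1}, with rows $i=1,\ldots,4$ rescaled by $i$. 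To show it has rank $4$, I will invoke the hypothesis $0<x_1<\cdots<x_n<1$: any four of its columns form a Vandermonde matrix with strictly increasing, positive nodes, hence nonsingular. Since $k-1\ge 5>4$, rank $4$ is automatic. The implicit function theorem then produces, near $y$, a smooth submanifold of solutions of dimension $(k-1)-4=k-5$, which is the first assertion.

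For the case $k=6$, the submanifold is one-dimensional, i.e.\ a curve. For the projection statement, fix any $i\in\{1,\ldots,5\}$ and consider the four free variables $\{x_1,\ldots,x_5\}\setminus\{x_i\}$. The $4\times 4$ Jacobian of $(P_1,P_2,P_3,P_4)$ with respect to these four variables, evaluated at $y$, is (up to the multiplicative factors $1,2,3,4$ in the rows) a Vandermonde determinant in four distinct positive numbers, hence nonzero. The implicit function theorem applied to this square system yields smooth functions expressing the four remaining coordinates as functions of $x_i$ in some open interval around $y_i$. In particular, the curve can be locally parametrized by $x_i$, so its projection onto the $x_i$-axis contains an open neighborhood of $y_i$.

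The argument is largely routine given Lemma \ref{lem:sample1}. The only substantive point is the non-vanishing of the relevant Vandermonde minors, which is where the ordering assumption $0<x_1<\cdots<x_n<1$ enters; without distinctness of the $x_j$, none of the Jacobian computations would deliver full rank. No new tools beyond the implicit function theorem and the Vandermonde identity are needed.
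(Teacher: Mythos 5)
Your proof is correct and follows essentially the same route as the paper: both reduce the claim to the full rank of a Vandermonde-type Jacobian and then invoke the implicit function theorem, the only difference being that the paper keeps the fixed coordinates in play via the differentials $dx_j$, $j\ge k$, and checks that the resulting $(n-k+5)\times n$ matrix has full rank, whereas you eliminate those coordinates first and verify rank $4$ of the reduced $4\times(k-1)$ Jacobian --- an equivalent computation after row reduction against the identity block. Your write-up is in fact slightly more explicit than the paper's (which calls the rank claim ``obvious''), since you pinpoint that distinctness of the coordinates from $0<x_1<\cdots<x_n<1$ is what makes the $4\times4$ Vandermonde minors nonsingular, both for the rank statement and for the local parametrization of the curve by each $x_i$ when $k=6$.
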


\begin{proof}
We look at the differentials $dP_j$, $j=1,2,3,4$ and $dx_j$, $j\ge k$.  To prove the first assertion it suffices to show that the $(n-k+5)\times n$ matrix
\begin{eqnarray*}
\begin{pmatrix}
1&1&\ldots &\ldots&1\\
x_1&x_2&\ldots &\ldots &\ldots&x_n\\
x_1^2&x_2^2&\ldots &\ldots&\ldots &x_n^2\\
x_1^3&x^3_2&\ldots &\ldots &\ldots&x_n^3\\
0&0&\ldots&1~0&\ldots&0\\
0&0&\ldots&0~1&\ldots&0\\
\vdots&\vdots&\ddots&\vdots~\vdots&\ddots&\vdots\\
0&0&\ldots&0~0&\ldots&1\\
\end{pmatrix}
\end{eqnarray*}
has rank $n-k+5$ which is obvious.  The second assertion follows from the fact we can solve for $n-1$ coordinates in terms of any one of $x_i$'s for $i=1,2,3,4$.  
\end{proof}

Let $M\subset S$ be a connected component of $S$.  We consider the following process in $M$.  Given that the process is at $y=(y_1,\ldots,y_n)\in M$, one chooses five coordinates $i_1,\ldots,i_5$ and the process can move to any point along the curve defined by
\begin{eqnarray*}
x_j=y_j,~~~{\rm for}~~j\ne i_1,\ldots,i_5.
\end{eqnarray*}
The question we want to answer is whether any two points in $M$ {\it communicate} in the sense that one can move from one to the other in a finite number of iterations.
More technically, we say two points, $y,z$ are sufficiently close if, given $y\in M$ there is $\delta>0$ such that if $z$ is within $\delta$ of $y$, then one can move from $y$ to $z$ 
in finite number of iterations.  The positive number $\delta$ may depend on $y$.
\begin{lemma}
\label{lem:sample3}
If two points $y,z\in M$ are sufficiently close then they communicate. 
\end{lemma}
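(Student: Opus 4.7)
The plan is to show that the set of points reachable from $y$ via the 5-coordinate process contains a neighborhood of $y$ in $M$, which immediately gives the lemma. This will be done by an inverse function theorem argument, after establishing that the tangent vectors obtained from the 5-tuple moves span the tangent space $T_y M$.

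First, by Lemma \ref{lem:sample1}, the $4\times n$ Jacobian matrix $A$ with entries $A_{ij}=i\,y_j^{i-1}$ has rank four, so $T_y M=\ker A$ has dimension $n-4$. For each 5-subset $I=\{j_1,\dots,j_5\}\subset\{1,\dots,n\}$, Lemma \ref{lem:sample2} (applied after relabelling coordinates so that $I$ plays the role of $\{1,\dots,5\}$) produces a smooth curve in $M$ through $y$ along which all coordinates outside $I$ are held fixed. Its velocity $v^{(I)}$ is supported on $I$ and spans the one-dimensional kernel of the $4\times 5$ submatrix $A_I$ of $A$; in particular $v^{(I)}\in T_y M$.

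The heart of the argument is to prove that the vectors $\{v^{(I)}\}$, as $I$ ranges over all 5-subsets, span $T_y M$. By duality, it suffices to show that any $\alpha\in\BBR^n$ with $\alpha\cdot v^{(I)}=0$ for every $I$ must lie in the row span of $A$. For a fixed $I$, the Vandermonde structure of $A_I$ gives that $(\alpha_{j_1},\dots,\alpha_{j_5})$ lies in the row span of $A_I$ if and only if $\alpha_{j_k}=Q_I(y_{j_k})$ for some polynomial $Q_I$ of degree at most three. Whenever two 5-subsets share four indices, the associated cubics agree at the four distinct values $y_j$ indexed by the common coordinates and hence coincide. Since the $y_j$'s are all distinct (from $y\in S$) and the graph on 5-subsets whose edges are pairs with four common indices is connected, all the $Q_I$ equal a single cubic $Q$. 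Then $\alpha_j=Q(y_j)$ for every $j$, placing $\alpha$ in the row span of $A$ and completing the span claim.

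With spanning established, choose $n-4$ linearly independent vectors $v^{(I_1)},\dots,v^{(I_{n-4})}$ among the $v^{(I)}$ and define
\begin{equation*}
\Phi(t_1,\dots,t_{n-4}) = \phi_{I_1}(t_1)\circ\cdots\circ \phi_{I_{n-4}}(t_{n-4})(y),
\end{equation*}
where $\phi_I(t)$ denotes motion along the 5-tuple curve for $I$ for parameter value $t$. The differential $D\Phi(0)$ sends the standard basis vectors to $v^{(I_1)},\dots,v^{(I_{n-4})}$ and is thus a linear isomorphism onto $T_y M$. The inverse function theorem, applied in local $(n-4)$-dimensional coordinates on $M$ supplied by Lemma \ref{lem:sample1}, yields $\delta>0$ such that $\Phi$ maps a neighborhood of the origin diffeomorphically onto the intersection of $M$ with a $\delta$-ball around $y$. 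Every such $z$ is then reached from $y$ in at most $n-4$ iterations of the process, which is exactly the assertion of the lemma.

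The main obstacle is the spanning step: the chaining argument that identifies the polynomials $Q_I$ across overlapping 5-subsets relies crucially on the distinctness of the $y_j$ and on the connectivity of the 5-subset graph under four-element overlaps. Once that is in hand the inverse function theorem delivers the conclusion without further effort.
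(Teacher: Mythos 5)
Your proof is correct, but it follows a genuinely different route from the paper's. The paper argues by induction on $n$: using the $5$-tuple $\{1,\dots,5\}$ with $k=6$ in Lemma \ref{lem:sample2}, the relevant determinant $\pm\prod_{i<j<6}(x_i-x_j)$ is nonzero on $S$, so the projection of the curve onto the $x_1$-axis covers a neighborhood of $y_1$; one move therefore takes $y$ to a point whose first coordinate equals $z_1$, and the induction hypothesis is then applied to the remaining $n-1$ coordinates. Your argument instead proves local surjectivity directly: you show the velocity vectors $v^{(I)}$ of the $5$-tuple curves span $T_yM=\ker A$, and then compose the corresponding local flows and invoke the inverse function theorem. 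The spanning step is the real content of your approach, and your duality argument for it is sound: any $\alpha$ annihilating every $v^{(I)}$ restricts on each $I$ to the row span of the Vandermonde-type block $A_I$, hence is interpolated on $I$ by a cubic $Q_I$; cubics attached to $5$-sets sharing four indices agree at four distinct nodes and so coincide, and connectivity of the Johnson graph forces a single global cubic, placing $\alpha$ in the row span of $A$. What the two approaches buy is different. The paper's induction is shorter and needs only the second assertion of Lemma \ref{lem:sample2}, though it quietly relies on the point reached after the first move still being sufficiently close to $z$ for the inductive hypothesis (a continuity/uniformity point it does not spell out). Your argument is more self-contained on that score, yields the explicit bound of $n-4$ moves, and isolates a clean linear-algebraic criterion (tangent vectors of the coordinate curves spanning $T_yM$) that transfers immediately to general $m$ and to other exponential families; the cost is the extra care needed to see that the flows $\phi_I(t)(x)$ are jointly smooth in $(x,t)$ (via a locally fixed choice of parametrizing coordinate and the implicit function theorem), which you should state explicitly before applying the chain rule at the origin.
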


\begin{proof}
We do induction on $n$.  The case $n=5$ is clear.  Let $i_1,\ldots,i_5=1,\ldots,5$ and $k=6$ in the notation of Lemma \ref{lem:sample2}.  Then the determinant of the matrix in the proof of Lemma \ref{lem:sample2} is
\begin{eqnarray*}
\pm \prod_{i<j<6}(x_i-x_j).
\end{eqnarray*}
Therefore if $y$ and $z$ sufficiently close then one can move from $(y_1,\ldots,y_n)$ to a point \\ $(z_1,y_2^\prime,\ldots,y_5^\prime,y_6,\ldots,y_n)$ by the second assertion of Lemma \ref{lem:sample2}.  Now the induction hypothesis applies to complete the proof. 
\end{proof}

\begin{prop}
\label{prop:sample1}
Any pair of points in $M$ communicate.
\end{prop}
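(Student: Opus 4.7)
The plan is to upgrade the local statement of Lemma \ref{lem:sample3} to a global one by a standard connectedness argument, using the fact that $M$ is by definition a connected component of $S$.

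First I would verify that the relation "$y$ communicates with $z$" is an equivalence relation on $M$. Reflexivity is trivial (zero iterations), symmetry follows because every proposed move along the $5$-coordinate curve is reversible (the same choice of five indices from $z$ lets one travel back to $y$), and transitivity follows by concatenating the two finite sequences of moves. So $M$ is partitioned into communication classes.

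Next I would show that each communication class is open in $M$ (with the subspace topology from $\BBR^n$). Fix $y\in M$ and let $[y]$ denote its class. For any $z\in [y]$, Lemma \ref{lem:sample3} supplies $\delta_z>0$ such that every point of $M$ within distance $\delta_z$ of $z$ communicates with $z$, hence (by transitivity) with $y$. Therefore $M\cap B_{\delta_z}(z)\subseteq [y]$, so $[y]$ is open. Since the complement of $[y]$ in $M$ is a union of other (open) communication classes, $[y]$ is also closed in $M$.

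Finally, $M$ is connected by hypothesis (it is a connected component of $S$), and a connected space admits no nontrivial clopen subset; hence $[y]=M$ for every $y\in M$, which is exactly the statement of the proposition.

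The only subtle point, and the place I would be most careful, is confirming that Lemma \ref{lem:sample3} really provides a \emph{neighborhood} in $M$ of any point $z$ of the prescribed class — i.e., that $\delta_z$ can be taken uniform over a full open ball around $z$ rather than depending on the target. Re-reading the statement of Lemma \ref{lem:sample3} (``given $y\in M$ there is $\delta>0$ such that if $z$ is within $\delta$ of $y$, then one can move from $y$ to $z$''), this is exactly what is asserted, so the openness step goes through without further work. No compactness or path-covering argument is needed, because the clopen-in-connected reasoning does the global step in one stroke.
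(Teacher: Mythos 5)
Your proof is correct and follows essentially the same route as the paper's: both establish the result by showing the set of points reachable from a given $y$ is open (via Lemma \ref{lem:sample3}) and closed in $M$, then invoke connectedness of $M$. The only cosmetic difference is that you obtain closedness by observing that the complement of a communication class is a union of open classes, whereas the paper argues closedness directly with a convergent sequence and an application of Lemma \ref{lem:sample3} at the limit point; the two are interchangeable.
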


\begin{proof}
 Starting at $y\in M$ we show that the set of points in $M$ that can be reached in a finite number of steps from $y$ is both open and closed in $M$.  The required result then follows from connectedness of $M$. From Lemma \ref{lem:sample3} it follows that the set of points that can be reached from $y$ in finitely many iterations is open.  To show closed-ness let $y=y^{(1)},y^{(2)},\ldots $ be a sequence of points each of which can be reached in finitely many steps from $y$ and assume $y^{(m)}\longrightarrow z\in M$.  Then for all $m$ sufficiently large the point $y^{(m)}$ lies is a sufficiently small neighborhood of $z$ and Lemma \ref{lem:sample3} is applicable to show that $z$ can be reached in finitely many steps from such $y^{(m)}$ proving the required `closed-ness'.  
 \end{proof}

Let $S^\prime$ be the set defined by

\begin{equation}
\label{eq:sample11}
0\le x_1,\ldots ,x_n\le 1,~~P_1(x)=c_1, \ldots, P_4(x)=c_4,
\end{equation}
and $M^\prime$ be a connected component of $S^\prime$.  We consider the process in $M^\prime$ where in addition we allow any permutation of the coordinates as well as evolution described in $M$.

\begin{prop}
\label{prop:sample2}
Any pair of points in $M^\prime$ communicate.
\end{prop}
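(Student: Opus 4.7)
The plan is to mirror the open--closed argument used for Proposition \ref{prop:sample1}, now handling the two new features of $M'$: coordinates may coincide or lie in $\{0,1\}$, and permutations of coordinates are also allowed moves. I would fix $y \in M'$ and let $R \subseteq M'$ denote the set of points reachable from $y$ in finitely many iterations; by connectedness of $M'$ it suffices to show $R$ is nonempty, open, and closed in $M'$. A useful initial observation: since any permutation $\sigma \in S_n$ is itself a single allowed move, $R$ is automatically invariant under the symmetric group action, so all points in a single $S_n$-orbit communicate for free.

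Next I would isolate the \emph{generic} locus $M'_0 \subseteq M'$ consisting of points $z$ whose coordinates are pairwise distinct and lie in $(0,1)$. At a generic $z$ there is a unique permutation $\sigma_z$ with $\sigma_z z$ in the open sorted region $S$ of Proposition \ref{prop:sample1}. Any $z'$ sufficiently close to $z$ is also generic with the same ordering, so $\sigma_z z'$ lies in the same connected component $M$ of $S$ as $\sigma_z z$. Applying Lemma \ref{lem:sample3} inside $M$, then bracketing by the two permutation moves $\sigma_z$ and $\sigma_z^{-1}$, shows $z$ and $z'$ communicate in $M'$. This gives openness and closedness of $R$ throughout the generic locus.

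For a non-generic $z \in M'$ that still has at least five distinct coordinate values in $(0,1)$, I would choose indices $i_1,\ldots,i_5$ with $z_{i_1},\ldots,z_{i_5}$ distinct and in $(0,1)$. The $5 \times 5$ Vandermonde $\prod_{a<b}(z_{i_a}-z_{i_b})$ is then nonzero, so the rank computation in Lemma \ref{lem:sample2} carries over verbatim using the coordinate tuple $\{i_1,\ldots,i_5\}$ instead of $\{1,\ldots,5\}$. The proofs of Lemmas \ref{lem:sample2} and \ref{lem:sample3} then transfer to give a local one-dimensional curve of one-step moves through $z$, and a density argument (generic points accumulate on $z$) promotes openness and closedness of $R$ up to $z$.

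The hard part is the degenerate case where $z \in M'$ has fewer than five distinct coordinate values in $(0,1)$. Here the combinatorial pattern of coincidences and boundary hits, combined with the four moment equations $P_j(z)=c_j$, reduces to an algebraic system in at most four unknowns (the distinct interior values, with multiplicities and boundary counts fixed by the combinatorial type). Generically in the $c_i$ this yields only finitely many solutions, so such a $z$ is isolated in $S'$ and forms a singleton connected component $M'=\{z\}$ where the proposition holds vacuously. The subtle point to verify carefully is that in every non-trivial connected component $M'$ the generic locus $M'_0$ is dense, which I would justify by a dimension count showing the singular locus of $S'$ is a proper subvariety of lower dimension.
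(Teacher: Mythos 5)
Your handling of the generic locus, and of points with at least five distinct interior coordinate values, is sound and matches the spirit of the paper's reduction to Proposition \ref{prop:sample1} (the paper simply notes that away from the set $V$ consisting of the boundary hyperplanes of the cube and the generalized diagonal $\bigcup_{i\neq j}\{x_i=x_j\}$, the claim follows from Proposition \ref{prop:sample1}). The genuine gap is in your degenerate case. The claim that a point $z\in M'$ with fewer than five distinct coordinate values in $(0,1)$ is isolated in $S'$ is false: finiteness of the solution set of the reduced system \emph{within the stratum of that fixed coincidence pattern} does not make $z$ isolated in $S'$, because nearby points of $S'$ can (and generically do) break the coincidences. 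Concretely, if $z$ has exactly four distinct values $a,b,c,d\in(0,1)$ occurring with multiplicities summing to $n\ge 5$, the Jacobian of $(P_1,\ldots,P_4)$ at $z$ still has rank $4$ (a Vandermonde in the four distinct values), so $S'$ is locally an $(n-4)$-dimensional manifold through $z$; for $n\ge 5$ the point is not a singleton component, and the proposition is not vacuous there. Moreover, density of the generic locus (your proposed dimension count) is not by itself sufficient: communication is defined through the specific allowed one-step curves and permutations, so you must actually exhibit an allowed move that carries such a $z$ off the degenerate locus, not merely show that generic points accumulate at it.

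This is precisely where the paper brings in a tool your proposal does not use: the Curve Selection Lemma \cite{Milnor}. The paper's proof is two lines: off $V$, apply Proposition \ref{prop:sample1}; on $V$, invoke curve selection to move away from $V$ in one step, after which Proposition \ref{prop:sample1} applies. To repair your argument you would either invoke curve selection in the same way, or argue directly that for a degenerate $z$ in a non-trivial component some choice of five coordinates covering at least four distinct values gives a rank-$4$ derivative matrix as in Lemma \ref{lem:sample2}, hence a one-step curve through $z$, and that this curve is not contained in the degenerate locus. As written, the final case of the proposal does not close.
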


\begin{proof}
For points away from the set $V$ consisting of the boundary hyperplanes of the unit cube in $\mathbb{R}^n$ and the generalized diagonal $\bigcup_{i\ne j}\{x_i=x_j\}$ the assertion follows from Proposition \ref{prop:sample1}.  Applying the Curve Selection Lemma (see for example \cite{Milnor}
) we move away from $V$ in one step, and then Proposition \ref{prop:sample1} is applicable. 
\end{proof}

\subsection{Valid  tests and connectedness}
For many applications of the present techniques, it is only a conjecture that the algorithms are ergodic.
Consider the manifold $\Mp$
above based on the first four sample moments. Choosing 5 coordinates
and sampling from the correct conditional distribution on the resulting curve 
gives a way of moving around on $\Mp$. However it has not been proved that this algorithm
is connected; 
Indeed Proposition \ref{prop:sample2}
of section 4.3 only shows that the algorithm
goes between points in the same connected component (in the topological sense)
in finitely many steps.

\cite{Bormeshenko} gave a difficult proof that the analogous problem based on
changing 3 coordinates  on the manifold determined by 
the sum and the sum of squares is connected and we certainly conjecture
this for any number of moments.

If these samples are used for goodness of fit test, 
there is a valid test procedure available, even in the absence of connectedness, by adapting an
idea of \cite{Besag}.

The idea is simple. Let ${\cal X}$ be the original data. This gives rise   to a point $x^*_0$ on $\Mp$.
Suppose $K(x,dy)$ is a Markov chain with the correct stationary distribution on the connected component containing $x^*_0$.
Fix a number of steps $T^*$ and run this chain $T^*$ steps to get $y^*$  say. Then run the time reversed chain, starting at $y^*$ for $T^*$ steps and independently repeat this $B^*$ times (starting at $y^*$ each time). This results in $(x_1^*,x_1^*,\ldots,x_{B^*}^*)\in \Mp$. The $B^*+1$ values
$(x_0^*,x_1^*,x_1^*,\ldots,x_{B^*}^*)$
are exchangeable, so the relative position of any test statistic
$s(x_0^*)$ among $s(x^*_i)$ is uniform
under the null hypothesis. If $s(x_0^*)$ is among the extreme values of these
statistics then a valid rejection is possible.

\section*{Acknowledgements}
We thank Hans Andersen, Olena Bormishenko,
Greg Brumfiel, Brian White, Leonid Pekelis and an anonymous referee for help with this paper. 
\bibliographystyle{imsart-nameyear.bst}
\bibliography{samplingm}

\end{document}